\newcommand{\Q}{\mathbb{Q}}
\newcommand{\Z}{\mathbb{Z}}
\newcommand{\V}{\mathcal{V}}
\newcommand{\Proj}{\mathbb{P}}
\newcommand{\p}{\mathfrak{p}}
\newcommand{\R}{\mathbb{R}}
\newcommand{\N}{\mathbb{N}}
\newcommand{\eps}{\varepsilon}
\newcommand{\st}{\colon}
\newcommand{\abs}[1]{\left|#1\right|}
\newcommand{\Zint}[1]{\left\llbracket#1\right\rrbracket}
\newcommand{\arch}[1]{\V_{#1}^\infty}
\newcommand{\narch}[1]{\V_{#1}^0}
\newcommand{\places}[1]{\V_{#1}}
\newcommand{\primes}[1]{\mathcal{P}_{#1}}
\newcommand{\nzero}{\backslash\{0\}}
\newcommand{\ceil}[1]{\left\lceil#1\right\rceil}
\newcommand{\norm}{N_{L/\Q}}
\DeclareMathOperator{\h}{\mathit{h}}
\DeclareMathOperator{\hmod}{\smash{\widetilde{\mathit{h}}}}
\DeclareMathOperator{\Log}{Log}
\DeclareMathOperator{\bigH}{\mathit{H}}
\newtheorem{prop}{Proposition}[section]
\crefname{prop}{Proposition}{Propositions}
\newtheorem{cor}[prop]{Corollary}
\newtheorem{lem}[prop]{Lemma}
\newtheorem{thm}[prop]{Theorem}
\crefname{lem}{Lemma}{Lemmas}
\theoremstyle{definition}
\newtheorem{defn}[prop]{Definition}
\newtheorem{claim}[prop]{Claim}
\newtheorem{rem}[prop]{Remark}
\crefname{rem}{Remark}{Remarks}
\author{Jean Kieffer}
\address{Harvard University, Mathematics Department\\
  Cambdidge, MA 02138, United States}
\email{kieffer@math.harvard.edu}
\date{}
\title[Heights of fractions from their values] {Upper bounds on the
  heights of polynomials and rational fractions from their values}
\begin{document}


\baselineskip=17pt

\begin{abstract}
  Let~$F$ be a univariate polynomial or rational fraction of
  degree~$d$ defined over a number field. We give bounds from above on
  the absolute logarithmic Weil height of~$F$ in terms of the heights
  of its values at small integers: we review well-known bounds
  obtained from interpolation algorithms given values at~$d+1$
  (resp.~$2d+1$) points, and obtain tighter results when considering a
  larger number of evaluation points.
\end{abstract}

\subjclass[2020]{11C08, 11G50}

\keywords{Heights, polynomials, rational fractions}

\maketitle

\section{Introduction}
\label{sec:intro}

Let~$F$ be a univariate rational fraction of degree~$d$ defined
over~$\Q$. The \emph{height} of~$F$, denoted by~$\h(F)$, measures the
size of the coefficients of~$F$. To define it, write $F=P/Q$ where
$P,Q\in \Z[X]$ are coprime; then $\h(F)$ is the maximum value
of~$\log\abs{c}$, where~$c$ runs through the nonzero coefficients
of~$P$ and~$Q$. In particular, if~$x=p/q$ is a rational number in
irreducible form, then~$\h(x) = \log\max\{\abs{p},\abs{q}\}$.

Heights can be generalized to arbitrary number fields, and are a basic
tool in diophantine geometry
\cite[Part~B]{hindry_DiophantineGeometry2000}. They are also
meaningful from an algorithmic point of view: the amount of memory
needed to store~$F$ in a computer is in general $O(d \h(F))$, and the
cost of manipulating~$F$ grows with the size of its coefficients.

In this paper, we are interested in the relation between the height
of~$F$ and the heights of evaluations~$F(x)$, where~$x$ is an
integer. One direction is easy: by
\cite[Prop.~B.7.1]{hindry_DiophantineGeometry2000}, we have
\begin{equation}
  \label{eq:height-eval}
  \h(F(x)) \leq d \h(x) + \h(F) + \log(d+1).
\end{equation}
In the other direction, when we want to bound~$\h(F)$ from the heights
of its values, matters are more complicated.

An easy case is when~$F\in\Z[X]$ is a polynomial with integer
coefficients of degree at most~$d\geq 1$. Then, looking at the
archimedean absolute value of the coefficients of~$F$ is sufficient to
bound~$\h(F)$. Moreover, given height bounds on~$d+1$ values of~$F$,
the Lagrange interpolation formula allows us to bound~$\h(F)$ in a
satisfactory way. For instance, assuming that
\begin{displaymath}
  \h(F(i)) \leq H \quad \text{for every } 0\leq i\leq d,
\end{displaymath}
we easily obtain
\begin{displaymath}
  \h(F) \leq H + d\log(2d) + \log(d+1).
\end{displaymath}
This result can be refined and adapted to other sets of intepolation
points \cite[Lem.~20]{broker_ExplicitHeightBound2010},
\cite[Lem.~4.1]{pazuki_ModularInvariantsIsogenies2019}; in any case
the bound on~$\h(F)$ is roughly~$H$ up to additional terms
in~$O(d\log d)$. This is consistent with
inequality~\eqref{eq:height-eval}.

When~$F$ is a rational fraction or even a polynomial with rational
coefficients, this result breaks down, and surprisingly little
information appears in the literature despite the simplicity of the
question.

\subsection{Polynomials.}
Let us first consider the case where~$F$ is a polynomial in~$\Q[X]$,
of degree at most~$d\geq 1$. Then~$F$ is determined by its values
at~$d+1$ distinct points. Let $x_1,\ldots, x_{d+1}$ be distinct
integers, let~$H\geq 1$, and assume that $\h(F(x_i))\leq H$ for
every~$i$. This time, the Lagrange interpolation formula yields a
bound on~$\h(F)$ which is roughly~$O(dH)$ (see
\cref{prop:poly-basic}). This is intuitive enough: in general,
computing~$F$ from its values~$F(x_i)$ involves reducing the rational
numbers~$F(x_i)$ to the same denominator, thus multiplying the heights
of the input by the number of evaluation points. But then,
inequality~\eqref{eq:height-eval} is very pessimistic at each of the
evaluation points~$x_i$: massive cancellations occur with the
denominator of~$F$, and the height of~$F(x_i)$ is just a fraction
$1/d$ of the expected value.

However, if we consider \emph{more} than~$d+1$ evaluation points
$x_1\ldots,x_N$ such that~$\h(F(x_i))\leq H$, we will likely find an
evaluation point where inequality~\eqref{eq:height-eval} is accurate,
and hence obtain a bound on~$\h(F)$ of the form~$O(H)$ rather
than~$O(dH)$. We prove the following result in this direction.

\newcommand{\polystatement}{
  Let~$L$ be a number field, and let $\Zint{A,B}$ be an interval
  in~$\Z$.  Write $D=B-A$ and $M=\max\{\abs{A},\abs{B}\}$.
  Let~$F\in L[X]$ be a polynomial of degree at most~$d\geq 1$,
  let~$N\geq d+1$, and let $x_1,\ldots,x_N$ be distinct elements
  of~$\Zint{A,B}$. Assume that $\h(F(x_i))\leq H$ for every
  $1\leq i\leq N$. Then we have
  \begin{displaymath}
    \h(F) \leq \frac{N}{N-d} H + D\log(D) + d\log(2M) + \log(d+1).
  \end{displaymath}
}

\begin{thm}
  \label{thm:main-poly}
  \polystatement
\end{thm}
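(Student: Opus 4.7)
The plan is to apply Lagrange interpolation at each place of $L$ separately, choosing a different subset of $d+1$ evaluation points at each place to extract the best bound locally. For any subset $I \subseteq \{1, \dots, N\}$ of size $d+1$, I would write
\[
  F(X) = \sum_{i \in I} F(x_i)\,\ell_i^I(X), \qquad
  \ell_i^I(X) = \prod_{\substack{j \in I\\ j \neq i}} \frac{X - x_j}{x_i - x_j},
\]
and apply the triangle or ultrametric inequality at each place $v$ of $L$:
\[
  \log \mu_v(F) \leq \delta_v \log(d+1) + \max_{i \in I}\bigl[\log |F(x_i)|_v + \log \mu_v(\ell_i^I)\bigr],
\]
where $\mu_v(F)$ denotes the maximum of $|c|_v$ over the coefficients $c$ of $F$, and $\delta_v$ equals $1$ at archimedean places and $0$ otherwise.

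The crucial point is that $I$ may depend on $v$. At each non-archimedean $v$, I would take $I_v$ to be the $d+1$ indices for which $\log^+|F(x_i)|_v$ is smallest; since the remaining $N-d$ values of $\log^+|F(x_i)|_v$ are at least as large, this smallest $(d+1)$-th value is at most $\frac{1}{N-d}\sum_i \log^+|F(x_i)|_v$. Weighting by $[L_v:\Q_v]/[L:\Q]$ and summing over places, the contribution from the $F(x_i)$'s is then bounded by $\frac{1}{N-d}\sum_i \h(F(x_i)) \leq \frac{NH}{N-d}$, producing the leading term of the stated bound.

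It remains to control the Lagrange basis polynomials $\ell_i^{I_v}$, which decompose as $\prod_{j \in I_v \setminus \{i\}}(X-x_j)/V_i^{I_v}$ with $V_i^{I_v} = \prod_{j \in I_v \setminus \{i\}}(x_i - x_j) \in \Z\nzero$. At archimedean places, the numerator has coefficients bounded by $(2M)^d$ and $|V_i^{I_v}|_v = |V_i^{I_v}| \geq 1$, yielding together with the $\log(d+1)$ factor the $d\log(2M) + \log(d+1)$ contribution. The non-archimedean contributions are $-\log|V_i^{I_v}|_v \geq 0$, which combine with the vanishing archimedean counterpart via the product formula for the nonzero integer $V_i^{I_v}$, together with the elementary bound $|x_i - x_j| \leq D$, to give the $D\log D$ term. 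The main obstacle lies in precisely this last combination: with $I_v$ varying with $v$, one must apply the inequality $\max_i(a_i+b_i) \leq \max_i a_i + \max_j b_j$ at each place and interchange summations carefully, so that the telescoping through the product formula --- using that only finitely many places contribute nontrivially to each $\log|V_i^{I_v}|_v$ --- gives a bound no larger than $D\log D$. This bookkeeping is the technically delicate step.
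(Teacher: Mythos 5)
Your overall strategy is the same as the paper's: choosing, at each place $v$, the $d+1$ evaluation points where $\log\max\{1,\abs{F(x_i)}_v\}$ is smallest and interpolating through them is exactly the content of \cref{lem:bad-values} (at most $d$ points can have $\abs{F(x)}_v$ small compared to $\abs{F}_v$), and your estimate of the $(d+1)$-th smallest value by $\frac{1}{N-d}\sum_i \log\max\{1,\abs{F(x_i)}_v\}$ reproduces the local statement of \cref{thm:main-poly-proved}. So the leading term $\frac{N}{N-d}H$ and the archimedean contribution $d\log(2M)+\log(d+1)$ are handled correctly.

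The gap is precisely at the step you flag as delicate, and the ingredients you name do not close it. The product formula applies to one fixed integer; here the denominators $V_i^{I_v}$ vary with the place, so the quantities $-\log\abs{V_i^{I_v}}_v$ do not telescope: a single place can contribute up to $\log\abs{V_i^{I_v}}\leq d\log D$, the set of contributing places is not controlled, and no interchange of summations turns this into $D\log D$. What is needed is one integer that dominates all the varying denominators $v$-adically, and this is exactly the observation in the proof of \cref{lem:lagrange-coefficients}: for \emph{any} $(d+1)$-element subset $I\subset\Zint{A,B}$ and any $i\in I$, the product $\prod_{j\in I\setminus\{i\}}(x_i-x_j)$ divides $D!$ (the factors below $x_i$ are distinct integers in $\Zint{1,x_i-A}$, those above are distinct integers in $\Zint{1,B-x_i}$, and $(x_i-A)!\,(B-x_i)!$ divides $D!$). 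Hence $\abs{V_i^{I_v}}_v\geq\abs{D!}_v$ for every $v\in\narch{L}$, and
\begin{displaymath}
  \sum_{v\in\narch{L}}\frac{d_v}{d_L}\,\max_{i\in I_v}\bigl(-\log\abs{V_i^{I_v}}_v\bigr)
  \;\leq\; \sum_{v\in\narch{L}}\frac{d_v}{d_L}\bigl(-\log\abs{D!}_v\bigr)
  \;=\;\log(D!)\;\leq\; D\log D,
\end{displaymath}
with no bookkeeping at all. The bound $\abs{x_i-x_j}\leq D$ alone only gives $\abs{V_i^{I_v}}\leq D^d$, which controls each place separately but not the sum over places. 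Once this divisibility is inserted, your argument is complete and coincides with the paper's proof.
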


For instance, we obtain a bound on~$\h(F)$ which is linear in~$H$ when
considering~$N=2d$ evaluation points. See also
\cref{thm:main-poly-proved} for local versions of this result.

\subsection{Rational fractions.}
Second, consider the case where $F\in \Q(X)$ is a rational fraction of
degree at most~$d\geq 1$. Then~$F$ is determined by its values
at~$2d+1$ points. If~$x_1,\ldots, x_{2d+1}$ are distinct integers
which are not poles of~$F$, and if $\h(F(x_i))\leq H$ for every~$i$,
then a direct analysis of the interpolation algorithm yields a bound
on~$\h(F)$ which is roughly~$O(d^2H)$ (see
\cref{prop:frac-basic-L}). As above, we can ask for a bound which is
linear in~$H$ when more evaluation points are given.

In this case we could imagine cases where~$F=P/Q$ has a very large
height, but massive cancellations happen in many quotients
$P(x_i)/Q(x_i)$. This makes the result more intricate.

\newcommand{\mainstatement}{Let~$L$ be a number field of degree~$d_L$
  over~$\Q$ and discriminant~$\Delta_L$. Let $\Zint{A,B}$ be an
  interval in~$\Z$, and write $D=B-A$ and $M=\max\{\abs{A},\abs{B}\}$.
  Let \mbox{$F\in L(X)$} be a univariate rational fraction of degree
  at most~$d\geq 1$. Let~$S$ be a subset of~$\Zint{A,B}$ which
  contains no poles of~$F$, let~$\eta\geq 1$, and let
  \mbox{$H\geq \max\{4, \log(2M)\}$}.  Assume that
  \begin{enumerate}
  \item $\h(F(x))\leq H$ for every $x\in S$.
  \item $S$ contains at least $D/\eta$ elements.
  \item $D \geq \max\{\eta d^3 H, 4 \eta d d_L\}$.
  \end{enumerate}
  Then we have
  \begin{displaymath}
    \h(F) \leq H + C_L\eta d\log (\eta dH) + d \log(2M) + \log(d+1),
  \end{displaymath}
  where~$C_L$ is a constant depending only on~$d_L$ and~$\Delta_L$. We
  can take $C_\Q=960$.
}

\begin{thm}
  \label{thm:main-frac}
  \mainstatement
\end{thm}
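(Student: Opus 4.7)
The plan is to write $F = P/Q$ with $P, Q \in \mathcal{O}_L[X]$ coprime and of trivial joint content, isolate a large subset $S' \subseteq S$ of ``good'' evaluation points where the ideal $\mathfrak{g}_x := (P(x)) + (Q(x))$ has small norm, and then apply polynomial interpolation to $P$ and $Q$ separately using points drawn from $S'$. Because $P$ and $Q$ are coprime, the hypothesis $\h(F(x)) \leq H$ is equivalent to saying that at each place $v$ of $L$ the local sizes $\|P(x)\|_v$ and $\|Q(x)\|_v$ are bounded by $e^H$ times the local size of $\mathfrak{g}_x$; in particular, $\h(P(x))$ and $\h(Q(x))$ are each at most $H + \log N(\mathfrak{g}_x)/d_L$.

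The main technical ingredient is a counting lemma bounding $\sum_{x \in S} \log N(\mathfrak{g}_x)$. For each prime $\mathfrak{p}$ of $\mathcal{O}_L$, the reductions $\bar P, \bar Q$ have at most $d$ common roots, so the number of $x \in \Zint{A,B}$ with $\mathfrak{p} \mid \mathfrak{g}_x$ is at most $d(D/N(\mathfrak{p}) + 1)$; a similar analysis at higher powers $\mathfrak{p}^k$, based on the Bézout identity $UP + VQ = R$ with $R = \Res(P,Q)$ (so that $\mathfrak{g}_x \mid (R)$), yields a bound of the form
\begin{equation*}
  \sum_{x \in S} \log N(\mathfrak{g}_x) \leq c_1 d\, \h(R) + c_2 d D,
\end{equation*}
for some constants $c_1, c_2$ depending on $L$. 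To make this bound useful, one first invokes \cref{prop:frac-basic-L} for the rough estimate $\h(F) \leq O(d^2 H)$ and hence $\h(R) \leq O(d^3 H)$ by standard resultant height bounds. Markov's inequality then produces, for a chosen threshold $T := C_L \eta d \log(\eta d H)$, a subset $S' \subseteq S$ of density comparable to that of $S$ with $\log N(\mathfrak{g}_x) \leq T$ for every $x \in S'$; the arithmetic hypotheses $|S| \geq D/\eta$ and $D \geq \eta d^3 H$ are exactly what is needed to absorb the bootstrap loss.

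Having identified $S'$, one picks $N$ good points $y_1, \ldots, y_N$ inside a sub-interval of width $W$, and applies the local polynomial interpolation bound from \cref{thm:main-poly-proved} (the refinement of \cref{thm:main-poly}) to $P$ and $Q$ separately. Since $\h(P(y_j)), \h(Q(y_j)) \leq H + T$ for each $y_j$, interpolation gives a bound of the shape $\h(P), \h(Q) \leq (1 + O(d/N))(H + T) + W \log W + d\log(2M) + \log(d+1)$. A careful balancing of $N$ and $W$ — exploiting that non-archimedean places contribute only the cancellation factor $T$, while the $W \log W$-type error is purely archimedean — then yields the claimed inequality $\h(F) \leq H + C_L \eta d \log(\eta d H) + d\log(2M) + \log(d+1)$.

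The hard part will be the counting lemma of the second paragraph, specifically at higher prime powers $\mathfrak{p}^k$: the trivial estimate $v_\mathfrak{p}(\mathfrak{g}_x) \leq v_\mathfrak{p}(R)$ applied uniformly would give the far too weak bound $\sum_x \log N(\mathfrak{g}_x) \leq |S| \log |R|$, and one must use Hensel-lifting-type arguments to bound the number of admissible $x$ modulo each $\mathfrak{p}^k$ more finely and recover the linear-in-$\h(R)$ dependence. A secondary difficulty is the simultaneous balancing of the interpolation parameters $N$ and $W$ so that the $W \log W$ term is absorbed into $T$ rather than dominating the bound, and the explicit arithmetic required to extract the constant $C_\Q = 960$ demands a tight accounting of every inequality along the way.
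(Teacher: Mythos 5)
Your skeleton follows the paper's proof quite closely in its core: bootstrap a rough bound $\h(F)\leq O(d^2H)$ from \cref{prop:frac-basic-L}, observe that the cancellation ideal $\mathfrak g_x=(P(x))+(Q(x))$ divides the resultant $R=\Res(P,Q)$ with $\h(R)=O(d^3\eta H)$, bound $\sum_{x\in S}\log\norm(\mathfrak g_x)$ by a prime-by-prime $p$-adic count, and extract a large set of ``good'' points by averaging; this is exactly the paper's Claim 7.2 together with \cref{lem:padic-L,lem:sum-primes-L}. But two points need attention. The more serious one is your endgame. The inference ``$\h(F(x))\leq H$ implies $\h(P(x)),\h(Q(x))\leq H+\frac{1}{d_L}\log\norm(\mathfrak g_x)$'' is valid over $\Q$ (where $\max\{|P(x)|,|Q(x)|\}\geq 1$ forces the affine and projective heights to differ only by the gcd), but it is false over a general number field: $P(x)$ and $Q(x)$ are algebraic integers that may both be wildly unbalanced across the archimedean places (think of both being divisible by a large power of a fundamental unit), so that $F(x)=P(x)/Q(x)$ has tiny height and $\mathfrak g_x$ tiny norm while $\h(P(x))$ is huge. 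Your separate interpolation of $P$ and $Q$ via \cref{thm:main-poly-proved} consumes affine heights of the values, which are therefore not controlled by $H+T$. The paper avoids this entirely: it never bounds $\h(P(x))$, but instead uses \cref{lem:bad-values} to locate a \emph{single} evaluation point $x$ (among $2dd_L+1$ points with small $\mathfrak s_x$ --- that count is precisely what is needed to beat at most $d$ bad points per archimedean place per polynomial) at which $|P(x)|_v\geq |P|_v/((2M)^d(d+1))$ and likewise for $Q$ at \emph{every} archimedean $v$, and then compares the scaling-covariant sums $\sum_{v\in\arch{L}}\frac{d_v}{d_L}\log\max\{|P|_v,|Q|_v\}$ and $\sum_{v\in\arch{L}}\frac{d_v}{d_L}\log\max\{|P(x)|_v,|Q(x)|_v\}=\h(F(x))+\hmod(\mathfrak s_x)$, with no $\max\{1,\cdot\}$ anywhere. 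To make your route work over $L$ you would need an extra unit-adjustment device (in the spirit of \cref{prop:height-reduce}) or to switch to the paper's single-point comparison.

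Second, the counting lemma you defer is the technical heart, and the form you assert, $\sum_{x\in S}\log\norm(\mathfrak g_x)\leq c_1 d\,\h(R)+c_2 dD$, is somewhat stronger than what the disk-counting argument actually yields: the paper's \cref{lem:padic-L}, summed over the $\omega(R)\leq \log|\norm(R)|/\log 2$ prime divisors of $R$, produces additional factors $\log(D)$ on the $\h(R)$-term and $\log\log|\norm(R)|$ on the $dD$-term (plus a content term $n\,v_\p(\mathfrak r)$, which you suppress by assuming trivial joint content --- itself not always achievable over $L$, only up to the class group as in \cref{prop:int-denom}). These extra factors are exactly why the hypothesis $D\geq \eta d^3H$ (rather than something smaller) is needed, so your numerology should be rechecked against the corrected lemma; the slack in the hypotheses does absorb it, but as stated your bound is unproved. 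Finally, your worry about balancing $W\log W$ is largely moot once $P,Q\in\Z_L[X]$: that term is purely nonarchimedean (it comes from $1/D!$), so interpolating integral polynomials does not produce it and no small window is needed --- but this only helps after the unit issue above is resolved.
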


We can give a general explicit expression for the constant~$C_L$ in
terms of~$d_L$ and~$\Delta_L$ (see~§\ref{sec:frac-proof}).  The number
of evaluation points needed in this result is quite large, and depends
on~$H$.  Still, \cref{thm:main-frac} is strong enough to imply the
following result.

\newcommand{\corstatement}{
  Let~$c\geq 1$, and let~$F\in\Q(X)$ be a rational fraction of degree at
  most~$d\geq 1$. Let~$V\subset\Z$ be a finite set such that~$F$ has
  no poles in $\Z\backslash V$. Assume that for
  every~$x\in \Z\backslash V$, we have
  \begin{displaymath}
    \h(F(x))\leq c\max\{1, d\log d + d\h(x)\}.
  \end{displaymath}
  Then there exists a constant~$C = C(c, \#V)$ such that
  \begin{displaymath}
    \h(F) \leq C d\log(4d).
  \end{displaymath}
  Explicitly, we can take
  $C = (4c+1923) (12 + \log\max\{1,\#V\} + 2\log(c))$.  }

\begin{cor}
  \label{cor:main}
  \corstatement
\end{cor}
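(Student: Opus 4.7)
The plan is to apply \cref{thm:main-frac} to a symmetric interval of integers, with $L = \Q$ (so $d_L = 1$ and $C_\Q = 960$). Concretely, set $\Zint{A,B} = \Zint{-K,K}$ for a large integer $K$ to be chosen later (so $D = 2K$ and $M = K$), take $S = \Zint{-K,K} \setminus V$, and fix $\eta = 2$. Since every $x \in S$ satisfies $\abs{x} \leq K$ we have $\h(x) \leq \log K$, and the hypothesis of the corollary yields $\h(F(x)) \leq H$ with
$$H := c\max\{1, d\log d + d\log K\}.$$

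Condition~(2) of the theorem reduces to $\abs{S} \geq K$, which holds as soon as $K \geq \#V - 1$ because $\abs{S} \geq 2K + 1 - \#V$. Condition~(3) becomes $K \geq d^3 H$ and $K \geq 4d$, the first of which is an implicit inequality $K \geq c d^4(\log d + \log K)$. Setting $T := 12 + \log\max\{1,\#V\} + 2\log c$, one checks that taking $K$ of the order of $e^T d^5$, so that $\log K = T + 5\log d$ up to absolute constants, satisfies these conditions as well as $H \geq \max\{4, \log(2K)\}$. Plugging into the theorem then gives
$$\h(F) \leq H + 1920\,d\log(2dH) + d\log(2K) + \log(d+1).$$

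The final step is to bound each term on the right by an appropriate multiple of $Td\log(4d)$. Substituting $\log K = T + 5\log d$ gives $H \leq cd(T + 6\log d) \leq 4cTd\log(4d)$; the terms $d\log(2K)$ and $\log(d+1)$ are both $O(Td\log(4d))$ and together contribute at most $3Td\log(4d)$; and bounding $\log(2dH)$ using $H \leq 4cTd\log(4d)$ yields $\log(2dH) \leq T\log(4d)$ for $T \geq 12$, so the middle term contributes at most $1920\,Td\log(4d)$. Summing gives the claimed bound $(4c+1923)\,T\,d\log(4d)$. The main difficulty is this last bookkeeping, in particular the inequality $\log(2dH) \leq T\log(4d)$, which requires carefully exploiting $T \geq 12$ and $\log(4d) \geq \log 4 > 1$ to absorb the terms $\log c$, $\log T$, and $\log\log(4d)$ that appear when unwinding $\log H$; naive estimates would produce an unacceptably large numeric coefficient in place of $1923$.
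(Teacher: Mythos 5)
Your proposal is correct and follows essentially the same route as the paper: apply \cref{thm:main-frac} with $L=\Q$, $\eta=2$, and $S$ a long integer interval minus $V$, take $H\approx c\,d\log K$, choose the interval length so that condition (3) holds, and then do the bookkeeping against $T=12+\log\max\{1,\#V\}+2\log c$ (the paper uses $\Zint{0,D}$ with $D=\max\{2\#V,\ceil{4cd^4\log(4cd^4)}\}$ instead of $\Zint{-K,K}$ with $K\sim e^Td^5$, a cosmetic difference). One tiny repair: when $c=d=1$ your $H=c\max\{1,d\log d+d\log K\}=\log K$ falls short of $\log(2K)$ by $\log 2$, so you should define $H=\max\{4,\log(2K),c(d\log d+d\log K)\}$ as the paper does; this changes nothing in the final constants.
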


It would be interesting to know whether we can obtain an efficient
bound on~$\h(F)$ using only~$O(d)$ evaluation points, as was the case
for polynomials, instead of~$O(d^3H)$. The constants in
\cref{thm:main-frac,cor:main} are not optimal; smaller constants can
be obtained following the same proofs, at the cost of lengthier
expressions.

The author has applied these results to obtain tight asymptotic height
bounds for modular equations on PEL Shimura
varieties~\cite{kieffer_DegreeHeightEstimates2020}, for instance
modular equations of Siegel and Hilbert type for abelian surfaces,
generalizing existing works in the case of classical modular
polynomials~\cite{pazuki_ModularInvariantsIsogenies2019}. These
modular equations are examples of rational fractions whose evaluations
can be shown to have small height.

\subsection*{Organization of the paper.}
In \cref{sec:heights}, we recall the definition of heights over a
number field that we use in the whole paper. In~\cref{sec:poly}, we
prove~\cref{thm:main-poly} about the heights of polynomials. To
prepare for the case of rational fractions, we study the relations
between heights and norms of integers in number fields
in~\cref{sec:heights-norms}. We prove height bounds for rational
fractions using the minimal number of evaluation points
in~\cref{sec:frac-basic}. Finally, \cref{sec:frac-lem,sec:frac-proof}
are devoted to the proof of~\cref{thm:main-frac}.

\subsection*{Acknowledgements} Thanks are due to the anonymous referee
for pointing out several errors in an earlier version of this
paper. This work is part of the author's PhD dissertation at the
University of Bordeaux (France), and he warmly thanks Damien Robert
and Aurel Page for their advice and encouragement.

\section{Heights over number fields}
\label{sec:heights}

Let~$L$ be a number field of degree~$d_L$ over~$\Q$. Write $\narch{L}$
(resp.~$\arch{L}$) for the set of all nonarchimedean
(resp.~archimedean) places of~$L$, and
write~\mbox{$\places{L} = \narch{L}\sqcup
  \arch{L}$}. Let~$\mathcal{P}_\Q$ (resp.~$\mathcal{P}_L$) be the set
of primes in~$\Z$ (resp.~prime ideals in the ring of integers~$\Z_L$
of~$L$).

For each place~$v$ of $L$, the local degree of~$L/\Q$ at~$v$ is
$d_v = [L_v:\Q_v]$, where subscripts denote completion.  Denote
by~$\abs{\cdot}_v$ the normalized absolute value associated with~$v$:
when $v\in \narch{L}$, and~$p\in \primes{\Q}$ is the prime below~$v$,
we have $\abs{p}_v = 1/p$. When~$v$ is archimedean, $\abs{\cdot}_v$ is
the usual real or complex absolute value.

The absolute logarithmic Weil height of projective tuples, affine
tuples, polynomials and rational fractions over~$L$ is defined as
follows \cite[§B.2 and §B.7]{hindry_DiophantineGeometry2000}.

\begin{defn} Let~$n\geq 1$, and let~$a_0,\ldots, a_n\in L$.
  \label{def:heights}
  \begin{enumerate}
  \item If the~$a_i$ are not all zero, the projective height
    of~$(a_0:\cdots:a_n)\in \Proj^n_L$ is
    \begin{displaymath}
      \h_{\mathrm{proj}}(a_0:\ldots:a_n) = \sum_{v\in\places{L}} \frac{d_v}{d_L} \log
      \Bigl(\max_{0\leq i\leq n} \abs{a_i}_v \Bigr).
    \end{displaymath}
  \item The affine height of~$(a_1,\ldots,a_n)\in L^n$ is the
    projective height of the tuple~$(1:a_1:\cdots:a_n)$:
    \begin{displaymath}
      \h(a_1,\ldots,a_n) = \sum_{v\in\places{L}} \frac{d_v}{d_L}
      \log \bigl(\max\{1, \max_{1\leq i\leq n} \abs{a_i}_v\} \bigr).
    \end{displaymath}
    In particular, for~$a\in L$, we have
    \begin{displaymath}
      \h(a) = \h_{\mathrm{proj}}(1:a) = \sum_{v\in\places{L}}
      \dfrac{d_v}{d_L} \log \bigl( \max\{1, \abs{a}_v\} \bigr).
    \end{displaymath}
  \item Let~$P = \sum_{i=0}^n a_i X^i \in L[X]$. For every
    place~$v\in\places{L}$, we write
    \begin{displaymath}
      \abs{P}_v = \max_i \abs{a_i}_v.
    \end{displaymath}
    The height of~$P$ is defined as the affine height of~$(a_0,\ldots,a_n)$. In other words
    \begin{displaymath}
      \h(P) = \sum_{v\in\places{L}} \dfrac{d_v}{d_L} \log
      \bigl(\max\{1, \abs{P}_v\} \bigr).
    \end{displaymath}
    If $\p\in\primes{L}$ is a prime ideal, we also define
    the~$\p$-adic valuation of~$P$ as
    \begin{displaymath}
      v_\p(P) = \min_{0\leq i\leq n} v_\p(a_i).
    \end{displaymath}
  \item Finally, if~$F\in L(X)$ is a rational fraction, and~$F = P/Q$
    where \mbox{$P,Q\in L[X]$} are coprime, we define~$\h(F)$ as the height
    of the projective tuple formed by all the coefficients of~$P$
    and~$Q$.
  \end{enumerate}
\end{defn}

If~$L=\Q$, then \cref{def:heights} coincides with the naive definition
of heights given in the introduction. By the product formula, heights
are independent of the ambient field
\cite[Lem.~B.2.1(c)]{hindry_DiophantineGeometry2000}. Recall that
\begin{equation}
  \label{eq:sum-degrees}
  \sum_{v\in\arch{L}}\frac{d_v}{d_L} = 1,
\end{equation}
a fact we will use many times when computing archimedean parts of heights.
Moreover, if~$x,y,z\in L$ with~$z\neq 0$, then we have
\begin{equation}
  \label{eq:height-prod-inv}
  \h(xy)\leq \h(x)+\h(y)\quad \text{and}\quad
  \h(1/z)=\h(z).
\end{equation}

As \cref{def:heights} suggests, in order to obtain height bounds for
polynomials and rational fractions, we will try to bound their
coefficients from above in the absolute values associated with all the
places of~$L$.

\section{Heights of polynomials from their values}
\label{sec:poly}

In this section, we estimate the height of a polynomial~$F\in L[X]$ of
degree at most~$d\geq 1$ in terms of the heights of evaluations
of~$F$. We choose our evaluation points to be integers in an
interval~$\Zint{A,B}\subset\Z$, and we write~\mbox{$D=B-A$} and
$M=\max\{\abs{A},\abs{B}\}$ (here $\abs{\cdot} = \abs{\cdot}_\infty$
is the archi\-medean absolute value). Our tool is the Lagrange
interpolation formula: if $x_1,\ldots, x_{d+1}\in\Zint{A,B}$ are
distinct, then
\begin{equation}
  \label{eq:lagrange}
  F = \frac{1}{D!} \sum_{i=1}^{d+1} F(x_i) Q_i
  \qquad \text{where } Q_i = D! \frac{\prod_{j\neq i}(X-x_j)}{\prod_{j\neq i}(x_i-x_j)} \in\Z[X].
\end{equation}

\begin{lem}
  \label{lem:lagrange-coefficients}
  In the notation of equality~\eqref{eq:lagrange}, we have
  $\abs{Q_i}_\infty \leq D!\, (2M)^d$ for all~$1\leq i\leq d+1$.
\end{lem}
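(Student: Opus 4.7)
The plan is to write $Q_i = D! \cdot L_i$, where $L_i = \prod_{j \neq i}(X - x_j) / \prod_{j \neq i}(x_i - x_j)$ is the $i$-th Lagrange basis polynomial, and to bound the archimedean sup-norm of $L_i$ by treating its numerator (a polynomial in $X$) and its denominator (a nonzero scalar) separately.

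For the numerator, I would expand $\prod_{j \neq i}(X - x_j)$ coefficient-by-coefficient in terms of elementary symmetric polynomials in the $x_j$ for $j\neq i$. Since every $x_j$ satisfies $\abs{x_j}\leq M$, the triangle inequality gives $|e_\ell(\{x_j\}_{j\neq i})|\leq \binom{d}{\ell}M^\ell$, so the sup-norm of the numerator is bounded by $\max_\ell \binom{d}{\ell}M^\ell \leq (1+M)^d$. For the denominator, $\prod_{j\neq i}(x_i - x_j)$ is a product of $d$ nonzero integers (the $x_j$ are pairwise distinct integers), so its absolute value is a positive integer, in particular at least $1$. Combining these two estimates and multiplying by $D!$ yields $\abs{Q_i}\leq D!\,(1+M)^d$. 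A final step is to upgrade $(1+M)^d$ to $(2M)^d$, which holds as soon as $M \geq 1$; this in turn follows because an integer interval $\Zint{A,B}$ containing $d+1\geq 2$ distinct integers must have $D\geq 1$, ruling out $A=B=0$.

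There is no real obstacle here: the lemma is essentially a direct computation, and the only minor verification is that $M\geq 1$. It is worth noting that the $D!$ prefactor is slack for this purely archimedean bound, since we have $\abs{\prod_{j\neq i}(x_i - x_j)}\geq 1$ only, rather than the much stronger divisibility statement $\prod_{j\neq i}(x_i - x_j)\mid D!$; the latter is the real reason for the $D!$ normalization, as it ensures $Q_i \in \Z[X]$, which is what will allow the interpolation formula \eqref{eq:lagrange} to be used meaningfully at non-archimedean places in the sequel.
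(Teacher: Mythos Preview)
Your proof is correct and follows essentially the same route as the paper's: bound the coefficients of $\prod_{j\neq i}(X-x_j)$ via elementary symmetric functions by $\binom{d}{\ell}M^\ell$, and bound the scalar factor $D!/\prod_{j\neq i}(x_i-x_j)$ by $D!$. The paper phrases the latter as ``$N_i$ divides $D!$'' and then passes directly to $\binom{d}{k}M^k\leq 2^d M^d$ (which also implicitly uses $M\geq 1$), whereas you go through $(1+M)^d$ first, but this is a cosmetic difference only.
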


\begin{proof}
  Since the denominator~$\prod_{j\neq i} (x_i - x_j)$ divides~$D!$, we
  have 
  \begin{displaymath}
    Q_i = N_i\prod_{j\neq i} (X - x_j)
  \end{displaymath}
  for some $N_i\in\Z$ dividing~$D!$. Therefore, for every
  $0\leq k\leq d$, if~$c_k$ denotes the coefficient of~$X^{d-k}$
  in~$Q_i$, we have
  \begin{displaymath}
    \abs{c_k}_\infty\leq \abs{N_i}_\infty \binom{d}{k} M^k \leq D!\, 2^d M^d. \qedhere
  \end{displaymath}
\end{proof}

A straightforward application of the Lagrange formula on~$d+1$
evaluation points yields the following result.

\begin{prop}
  \label{prop:poly-basic}
  Let $F\in L[X]$ be a univariate polynomial of degree at
  most~$d\geq 1$, and let $x_1,\dots, x_{d+1}$ be distinct integers in
  $\Zint{ A, B}$. Write \mbox{$D=B-A$} and~$M=\max\{\abs{A},\abs{B}\}$.
  \begin{enumerate}
  \item \label{it:poly-local} For every~$v\in \narch{L}$, we have
    \begin{displaymath}
      \abs{F}_v \leq \abs{\dfrac{1}{D!}}_v\max\{ \abs{F(x_1)}_v,\ldots,
      \abs{F(x_{d+1})}_v\},
    \end{displaymath}
    and for every~$v\in \arch{L}$, we have
    \begin{displaymath}
      \abs{F}_v \leq (d+1)(2M)^d \max\{ \abs{F(x_1)}_v,\ldots,
      \abs{F(x_{d+1})}_v\}.
    \end{displaymath}
  \item \label{it:poly-global} Assume that
    $\h(F(x_i))\leq H$ for every~$1\leq i\leq d+1$. Then
    \begin{displaymath}
      \h(F) \leq (d+1)H + D\log(D) +
      d\log(2M) + \log(d+1).
    \end{displaymath}
  \end{enumerate}
\end{prop}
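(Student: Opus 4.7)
The proposal is to derive both parts directly from the Lagrange formula \eqref{eq:lagrange} and \cref{lem:lagrange-coefficients}, with the weighted sum over places giving the global bound via the product formula.

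For part~\eqref{it:poly-local}, I fix a place~$v$ and use the representation $F = \frac{1}{D!}\sum_{i=1}^{d+1} F(x_i) Q_i$. A coefficient of~$F$ is a sum $\frac{1}{D!}\sum_i F(x_i) c_{k,i}$ where $c_{k,i}\in\Z$ is the degree-$k$ coefficient of~$Q_i$. When $v\in\narch{L}$, since $Q_i\in\Z[X]$ we have $\abs{c_{k,i}}_v\leq 1$, so the ultrametric inequality applied to the sum of $d+1$ terms yields the claimed bound $\abs{F}_v \leq \abs{1/D!}_v \max_i\abs{F(x_i)}_v$. When $v\in\arch{L}$, \cref{lem:lagrange-coefficients} gives $\abs{c_{k,i}}_v\leq D!\,(2M)^d$, and the ordinary triangle inequality on a $(d+1)$-term sum (the factor $1/D!$ cancelling the $D!$ from Lemma~4.1) yields $\abs{F}_v \leq (d+1)(2M)^d \max_i\abs{F(x_i)}_v$.

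For part~\eqref{it:poly-global}, I take logarithms place by place and weight by $d_v/d_L$. For any place, the local bound can be written uniformly as $\max\{1,\abs{F}_v\} \leq C_v \cdot \max_i \max\{1,\abs{F(x_i)}_v\}$, where $C_v=\abs{1/D!}_v$ at nonarchimedean places (which is $\geq 1$) and $C_v = (d+1)(2M)^d$ at archimedean places. Using $\log\max_i a_i \leq \sum_i \log a_i$ for $a_i\geq 1$, the sum over places of the second factor is at most $\sum_{i=1}^{d+1} \h(F(x_i)) \leq (d+1)H$. The remaining contribution is $\sum_v (d_v/d_L)\log C_v$, which splits as follows: the archimedean part contributes $\log((d+1)(2M)^d)$ by \eqref{eq:sum-degrees}, and the nonarchimedean part contributes $\sum_{v\in\narch{L}} (d_v/d_L) \log\abs{1/D!}_v = \log(D!)$ by the product formula applied to $1/D!$ (since at archimedean places $\log\abs{1/D!}_v = -\log(D!)$).

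Combining and using the standard estimate $\log(D!)\leq D\log(D)$ gives exactly
\[
\h(F) \leq (d+1)H + D\log(D) + d\log(2M) + \log(d+1).
\]
The main bookkeeping obstacle is to carry the $1/D!$ factor correctly across all places so that the product formula produces the clean $\log(D!)$ term, and to verify that the nonarchimedean replacement $\max\{1,\abs{F}_v\}\leq \abs{1/D!}_v \max_i\max\{1,\abs{F(x_i)}_v\}$ is valid (which uses $\abs{1/D!}_v\geq 1$); the rest is a routine assembly.
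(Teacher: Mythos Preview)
Your proposal is correct and follows essentially the same route as the paper: the local bounds come straight from the Lagrange formula and \cref{lem:lagrange-coefficients}, and the global bound is obtained by weighting with $d_v/d_L$, bounding $\max_i$ by the product, and collecting the $C_v$-contributions into $\log(D!)$ (the paper writes this last step as $\h(1/D!)=\h(D!)=\log(D!)$, which is the same computation in different notation).
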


\begin{proof} Part~\ref{it:poly-local} is an immediate consequence of
  the interpolation formula~\eqref{eq:lagrange}, and
  \cref{lem:lagrange-coefficients} for archimedean places. For
  part~\ref{it:poly-global}, let~$v$ be a place of~$L$. By
  part~\ref{it:poly-local}, we have
  \begin{displaymath}
    \max\{1, \abs{F}_v\}\leq C_v \prod_{i=1}^{d+1} \max\{1, \abs{F(x_i)}_v\}
  \end{displaymath}
  where~$C_v = \abs{1/D!}_v$ if~$v$ is nonarchimedean, and
  $C_v = (d+1)(2M)^d$ if~$v$ is archimedean. Taking logarithms and summing,
  we obtain
  \begin{displaymath}
    \h(F) \leq \h(1/D!) + \bigl(d\log(2M)+\log(d+1)\bigr) \sum_{v\in \arch{L}} \dfrac{d_v}{d_L}
    + \sum_{i=1}^{d+1} \h(F(x_i)).
  \end{displaymath}
  By~eq.~\eqref{eq:height-prod-inv}, we
  have~$\h(1/D!) = \h(D!) = \log(D!)\leq D\log(D)$. The result follows
  then from~eq.~\eqref{eq:sum-degrees}.
\end{proof}

It is interesting to compare \cref{prop:poly-basic} with
\cite[Cor.~B.2.6]{hindry_DiophantineGeometry2000}, using the
evaluation maps at~$x_i$ as linear maps from~$L[X]$ to~$L$: under the
hypotheses of the proposition, the height of the \emph{tuple}
$(F(x_1),\ldots,F(x_{d+1}))$ can be as large as~$(d+1)H$.

\begin{rem}
  The result of \cref{prop:poly-basic} takes a particularly nice form
  because the evaluation points~$x_i$ are integers taken in a fixed
  interval. If we only assume the~$x_i$ to be distinct algebraic
  integers of bounded height, then providing an upper bound on the
  height of a common multiple of all products of the
  form~$\prod_{j\neq i} (x_i - x_j)$ seems more complicated. A similar
  issue arises when the~$x_i$ are only assumed to be distinct points
  in~$\Q$ of bounded height.  However, if the evaluation points~$x_i$
  are chosen to be rational numbers with the same denominator, then
  one can still apply \cref{prop:poly-basic} to a rescaled
  polynomial. In the rest of this paper, we will continue to consider
  (almost) consecutive integers as evaluation points.
\end{rem}

Better upper bounds on~$\h(F)$ can be obtained given height bounds on
more than~$d+1$ values of~$F$: this is the content of
\cref{thm:main-poly}, which we recall here with additional local
statements.

\begin{thm}
  \label{thm:main-poly-proved}
  \polystatement
  More precisely, for every~$v\in \places{L}$, we have
  \begin{displaymath}
    \log \max\{1,\abs{F}_v\} \leq C_v
    + \dfrac{1}{N-d} \sum_{i=1}^{N} \log \max\{1, \abs{F(x_i)}_v\}
  \end{displaymath}
  where~$C_v = \log\abs{1/D!}_v$ if~$v\in \narch{L}$, and
  $C_v = d\log (2M) + \log (d+1)$ if~$v\in \arch{L}$.
\end{thm}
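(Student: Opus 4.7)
The plan is to strengthen Proposition 2.2 by exploiting the freedom to choose, \emph{for each place $v$ independently}, the $d+1$ interpolation points out of $x_1,\ldots,x_N$ that minimize the local bound. The local statement will imply the global one by summing over all places weighted by $d_v/d_L$, exactly as in the proof of \cref{prop:poly-basic}.

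Fix a place $v \in \places{L}$ and write $a_i = \log\max\{1,\abs{F(x_i)}_v\} \geq 0$ and $b = \log\max\{1,\abs{F}_v\}$. I first observe that \cref{prop:poly-basic}\ref{it:poly-local} applies to \emph{any} choice of $d+1$ distinct points among $x_1,\ldots,x_N$, since it only uses the Lagrange formula~\eqref{eq:lagrange}. Bounding the max by a max of values $\geq 1$ as in the proof of \cref{prop:poly-basic}\ref{it:poly-global}, we obtain, for every subset $S \subset \{1,\ldots,N\}$ of size $d+1$,
\begin{displaymath}
  b \leq C_v + \max_{i\in S} a_i,
\end{displaymath}
where $C_v = \log\abs{1/D!}_v$ at nonarchimedean places and $C_v = d\log(2M) + \log(d+1)$ at archimedean places.

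Now I choose $S = S_v$ to consist of the $d+1$ indices $i$ with the \emph{smallest} values of $a_i$. Sorting the $a_i$ in decreasing order as $a_{i_1}\geq a_{i_2}\geq\cdots\geq a_{i_N}$, this choice makes $\max_{i\in S_v} a_i = a_{i_{N-d}}$, the $(N-d)$-th largest value. Since the $N-d$ largest values $a_{i_1},\ldots,a_{i_{N-d}}$ are all at least $a_{i_{N-d}}$, a standard sorting argument gives
\begin{displaymath}
  (N-d)\, a_{i_{N-d}} \leq \sum_{j=1}^{N-d} a_{i_j} \leq \sum_{i=1}^N a_i,
\end{displaymath}
where the last inequality uses $a_i \geq 0$. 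Combining,
\begin{displaymath}
  \log\max\{1,\abs{F}_v\} \leq C_v + \frac{1}{N-d}\sum_{i=1}^N \log\max\{1,\abs{F(x_i)}_v\},
\end{displaymath}
which is the asserted local inequality.

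Finally, I sum this local inequality over $v \in \places{L}$ weighted by $d_v/d_L$. The contribution of the $a_i^v$'s rearranges into $\frac{1}{N-d}\sum_{i=1}^N \h(F(x_i)) \leq \frac{N}{N-d}H$, and the contribution of the $C_v$'s is $\h(1/D!) + d\log(2M) + \log(d+1)$ by~\eqref{eq:sum-degrees} and~\eqref{eq:height-prod-inv}, which is bounded by $D\log(D) + d\log(2M) + \log(d+1)$ since $\h(1/D!) = \log(D!) \leq D\log(D)$. This yields the global bound. The only genuinely new ingredient, compared to \cref{prop:poly-basic}, is the observation that the optimal choice of interpolation points is place-dependent; once that is recognized, no real obstacle remains, and the argument is purely combinatorial.
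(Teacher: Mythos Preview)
Your proof is correct and follows essentially the same approach as the paper: both arguments rest on the observation that, at each place~$v$, at most~$d$ of the~$N$ evaluation points can give a ``small'' value of~$\abs{F(x_i)}_v$ relative to~$\abs{F}_v$. The paper isolates this as \cref{lem:bad-values} (stated as a contrapositive of \cref{prop:poly-basic}\ref{it:poly-local}) and then bounds the product over the~$N-d$ ``good'' indices, whereas you inline the same idea by picking the~$d+1$ smallest~$a_i$ as interpolation points and bounding~$a_{i_{N-d}}$ by the average; the two routes are logically equivalent and yield the identical local inequality.
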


We will need the following lemma.

\begin{lem}
  \label{lem:bad-values}
  Keep the notation from \cref{thm:main-poly-proved}, and let
  $v\in\narch{L}$ (resp.\ $v\in\arch{L})$. Then the number of
  elements~$x\in \Zint{ A,B}$ satisfying the inequality
  \begin{displaymath}
    \abs{F(x)}_{v} < \abs{D!\,F}_{v} \quad \Bigl( \text{resp.\ } \abs{F(x)}_{v} <
    \dfrac{\abs{F}_v}{(2M)^d (d+1)} \Bigr)
  \end{displaymath}
  is at most~$d$.
\end{lem}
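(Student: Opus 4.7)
The plan is to argue by contradiction, directly invoking the local bounds of \cref{prop:poly-basic}\ref{it:poly-local}. Suppose that there exist $d+1$ distinct integers $y_0,\ldots,y_d \in \Zint{A,B}$ all satisfying the bad inequality at the place $v$. These are distinct integers in $\Zint{A,B}$, so they are eligible as interpolation points for \cref{prop:poly-basic}, which bounds $\abs{F}_v$ in terms of the $\abs{F(y_i)}_v$.

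In the nonarchimedean case, \cref{prop:poly-basic}\ref{it:poly-local} gives
\begin{displaymath}
  \abs{F}_v \leq \abs{1/D!}_v \max_{0\leq i\leq d} \abs{F(y_i)}_v
  < \abs{1/D!}_v \cdot \abs{D!\,F}_v = \abs{F}_v,
\end{displaymath}
which is absurd. In the archimedean case, the same proposition gives
\begin{displaymath}
  \abs{F}_v \leq (d+1)(2M)^d \max_{0\leq i\leq d} \abs{F(y_i)}_v
  < (d+1)(2M)^d \cdot \frac{\abs{F}_v}{(2M)^d(d+1)} = \abs{F}_v,
\end{displaymath}
again a contradiction. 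In either case, at most $d$ elements of $\Zint{A,B}$ can satisfy the bad inequality, which is the desired conclusion.

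There is no real obstacle: the numerical constants in the lemma's threshold were evidently designed precisely to match the Lagrange interpolation bounds of \cref{prop:poly-basic}\ref{it:poly-local}, so the contradiction lines up exactly.
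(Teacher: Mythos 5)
Your proof is correct and is precisely the argument the paper intends: its proof of this lemma is the one-line instruction ``argue by contradiction, using part~\ref{it:poly-local} of \cref{prop:poly-basic}'', and you have carried out exactly that contradiction, with the constants matching as you note. Nothing to add.
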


\begin{proof}[Proof of \cref{lem:bad-values}]
  We argue by contradiction, using part~\ref{it:poly-local} of
  \cref{prop:poly-basic}.
\end{proof}

\begin{proof}[Proof of \cref{thm:main-poly-proved}] It is enough to
  prove the local statements: after that, the global statement results
  from summing all the local contributions. Let~$v$ be a place of~$L$.
  If $v\in\narch{L}$, then by \cref{lem:bad-values}, we have
  \mbox{$\abs{F(x_i)}_v \geq \abs{D!\,F}_v$} for at least~$N-d$ values
  of~$i$. Therefore,
  \begin{displaymath}
    \prod_{i = 1}^{N} \max\{1, \abs{F(x_i)}_v\} \geq \abs{D!\,F}_v^{N-d}
  \end{displaymath}
  and
  \begin{displaymath}
    \log \max\{1,\abs{F}_v\} \leq \log\abs{\dfrac{1}{D!}}_v
    + \dfrac{1}{N-d} \sum_{i=1}^{N} \log \max\{1, \abs{F(x_i)}_v\}.
  \end{displaymath}
  Similarly, if $v\in\arch{L}$, then at least~$N-d$ of the~$F(x_i)$ satisfy the
  inequality \mbox{$\abs{F(x_i)}_v \geq \abs{F}_v/(2M)^d (d+1)$}, so
  \begin{align*}
    \log \max\{1,\abs{F}_v\} 
    &\leq d\log (2M) +
           \log (d+1) \\
    &\quad + \dfrac{1}{N-d}\sum_{i=1}^{N} \log\max\{1,\abs{F(x_i)}_v\}. \qedhere
  \end{align*}
\end{proof}

\section{Heights and norms of integers}
\label{sec:heights-norms}

Let~$L$ be a number field, let~$\Z_L$ be its ring of integers, and
let~$\Delta_L$ be its discriminant.  In this section, we study the
relation between the height of elements of~$\Z_L$ and their norms. We
denote the norm of elements and fractional ideals in~$L$ by~$\norm$.

\begin{defn}
  Let~$x\in L\nzero$. Then we define
  \begin{displaymath}
    \hmod(x) = \frac{1}{d_L} \log\abs{\norm(x)}
    = \sum_{v\in\arch{L}} \frac{d_v}{d_L} \log\abs{x}_v.
  \end{displaymath}
  If~$\mathfrak a$ is a fractional ideal in~$L$, we also write
  \begin{displaymath}
    \hmod(\mathfrak a) = \frac{1}{d_L} \log \norm(\mathfrak a).
  \end{displaymath}
\end{defn}

If the reader is interested in the case~$L=\Q$, then the remainder of
this section can be safely skipped since~$\hmod$ and~$\h$ are equal
on~$\Z$. In general, they are not equal: for instance,~$\hmod$ is
invariant under multiplication by units. This is not the case for~$\h$
as soon as~$L$ admits a fundamental unit, by the Northcott property
\cite[Thm.~B.2.3]{hindry_DiophantineGeometry2000}.

\begin{lem}
  \label{lem:hmod-h}
  Let $x\in\Z_L\nzero$. Then we have
  \begin{displaymath}
    0\leq \hmod(x) \leq \h(x).
  \end{displaymath}
  Equality holds on the right if and only if $\abs{x}_v\geq 1$ for every
  $v\in \arch{L}$.
\end{lem}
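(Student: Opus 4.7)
The proof reduces to manipulating the two explicit formulas with the product formula and the integrality of $x$. The two key inputs are: for $x \in \Z_L$, the non-archimedean absolute values satisfy $\abs{x}_v \leq 1$ for all $v \in \narch{L}$ (so $\log\max\{1,\abs{x}_v\} = 0$ at nonarchimedean places); and, for $x\neq 0$, the product formula $\sum_{v\in\places{L}} \frac{d_v}{d_L}\log\abs{x}_v = 0$.

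For the lower bound, I would use the product formula to rewrite
\begin{displaymath}
  \hmod(x) = \sum_{v\in\arch{L}} \frac{d_v}{d_L}\log\abs{x}_v
  = -\sum_{v\in\narch{L}} \frac{d_v}{d_L}\log\abs{x}_v.
\end{displaymath}
Each term on the right is nonnegative since $\log\abs{x}_v \leq 0$ for $v\in\narch{L}$, so $\hmod(x)\geq 0$.

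For the upper bound and the equality case, the integrality of $x$ gives
\begin{displaymath}
  \h(x) = \sum_{v\in\places{L}} \frac{d_v}{d_L}\log\max\{1,\abs{x}_v\}
  = \sum_{v\in\arch{L}} \frac{d_v}{d_L}\max\{0,\log\abs{x}_v\}.
\end{displaymath}
Comparing term by term with the definition of $\hmod(x)$ and using $\max\{0,y\}\geq y$, we get $\h(x)\geq\hmod(x)$. Equality holds if and only if $\max\{0,\log\abs{x}_v\} = \log\abs{x}_v$ for every $v\in\arch{L}$, which is exactly the condition $\abs{x}_v\geq 1$ for every archimedean $v$.

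There is no real obstacle here; the whole argument is bookkeeping around the product formula. The only point worth double-checking is that the rewriting of $\h(x)$ really discards all nonarchimedean contributions, which is precisely where the hypothesis $x \in \Z_L$ (rather than $L$) is used.
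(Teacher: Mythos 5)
Your proof is correct and in substance the same as the paper's: the paper gets $\hmod(x)\geq 0$ directly from $\norm(x)$ being a nonzero rational integer (equivalent to your product-formula rewriting, given $\abs{x}_v\leq 1$ at all $v\in\narch{L}$), and dismisses the upper bound and the equality case as obvious --- precisely the termwise comparison $\max\{0,\log\abs{x}_v\}\geq\log\abs{x}_v$ over $v\in\arch{L}$ that you spell out.
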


\begin{proof}
  We have $N_{L/\Q}(c)\in\Z\backslash\{0\}$, hence
  $\abs{N_{L/\Q}(c)}\geq 1$ and~\mbox{$\hmod(x)\geq 0$}. The rest is
  obvious.
\end{proof}

\begin{prop}
  \label{prop:height-reduce}
  There exists a constant~$C$ depending only on~$L$ such that for
  every $x\in\Z_L\nzero$, there exists a unit $\eps\in \Z_L^\times$
  such that
  \begin{displaymath}
    \h(\eps x) \leq \max \{ C
    ,\,\hmod(x)\}.
  \end{displaymath}
  We can take $C = d_L \sum_{i\in I} \h(\eps_i)$, where
  $(\eps_i)_{i\in I}$ is any basis of units in~$\Z_L$. 
\end{prop}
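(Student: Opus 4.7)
The plan is to combine the logarithmic embedding of units with Dirichlet's unit theorem. Define $\Log\st\Z_L^\times \to \R^{\#\arch{L}}$ by $\Log(u) = (\log\abs{u}_v)_{v\in\arch{L}}$; its image lies in the hyperplane $H = \{\vec t\st \sum_{v\in\arch{L}} (d_v/d_L)\, t_v = 0\}$ and, modulo torsion, is a full lattice with basis $(\Log(\eps_i))_{i\in I}$. Any vector in $H$ can thus be written as an integral combination $\sum_i n_i \Log(\eps_i)$ up to a sup-norm error of at most $R := \tfrac12 \sum_{i\in I} \|\Log(\eps_i)\|_\infty$, by expanding in this basis and rounding each coordinate to the nearest integer.

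To apply this to $x\in \Z_L\nzero$, I would set $\vec y = \Log(x)$, whose weighted sum is $\hmod(x)$, and choose the target $\vec t_0 = (\hmod(x),\ldots,\hmod(x))$, so that $\vec t_0 - \vec y$ lies in $H$. The approximation then yields a unit $\eps$ satisfying $\abs{\log\abs{\eps x}_v - \hmod(x)} \leq R$ for every $v\in\arch{L}$. Since $\eps x\in \Z_L\nzero$ and $\abs{\eps x}_v = \abs{x}_v$ for every nonarchimedean $v$, we have $\h(\eps x) = \sum_{v\in\arch{L}} (d_v/d_L)\max\{0,\log\abs{\eps x}_v\}$. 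Now split into cases: if $\hmod(x) \geq R$ then all $\log\abs{\eps x}_v$ are nonnegative, so \cref{lem:hmod-h} combined with the equality $\hmod(\eps x) = \hmod(x)$ (as $\eps$ has norm $\pm 1$) gives $\h(\eps x) = \hmod(x)$; if $\hmod(x) < R$ then every coordinate is bounded above by $\hmod(x) + R < 2R$, so $\h(\eps x) \leq 2R$. Setting $C = 2R$ yields the desired inequality in both cases.

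For the explicit constant, the bound $\|\Log(\eps_i)\|_\infty \leq 2 d_L \h(\eps_i)$ follows from combining the unit identity $\sum_v (d_v/d_L)\log\abs{\eps_i}_v = 0$ with the formula $\h(\eps_i) = \sum_v (d_v/d_L)\max\{0,\log\abs{\eps_i}_v\}$: these together force $d_v\abs{\log\abs{\eps_i}_v}\leq 2 d_L \h(\eps_i)$ at every archimedean place. The main obstacle is to control the rounding error tightly enough to land on the stated $C = 2 d_L \max_i \h(\eps_i)$, since naive rounding accumulates a factor of the unit rank; the conceptual content, however, is entirely encapsulated in Dirichlet's theorem ensuring that $\Log(\Z_L^\times)$ is a full lattice in $H$, after which the argument is a mechanical case analysis on the size of $\hmod(x)$.
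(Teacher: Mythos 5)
Your construction rests on the same idea as the paper's proof: push $x$ around by a unit, located via Dirichlet's theorem in the logarithmic space, so that either all archimedean coordinates of $\Log(\eps x)$ become nonnegative (and then $\h(\eps x)=\hmod(\eps x)=\hmod(x)$ by \cref{lem:hmod-h}), or they are all bounded below by a quantity depending only on $L$ (and then $\h(\eps x)$ is at most a constant of $L$); your case split on the size of $\hmod(x)$ is exactly the paper's. So you do prove the qualitative assertion, namely the existence of a constant $C$ depending only on $L$, which is all that the later sections (\cref{prop:int-denom}, \cref{thm:main-frac}) use, and for $L=\Q$ (unit rank $0$) your constant and the paper's both vanish.

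The genuine gap is the explicit constant claimed in the last sentence of the statement. Your lattice-approximation step is nearest-integer rounding of the coordinates in the basis $(\Log(\eps_i))_{i\in I}$, which only gives a sup-norm error $R=\tfrac12\sum_{i\in I}\|\Log(\eps_i)\|_\infty$, hence $C=2R\le d_L\sum_{i\in I}\h(\eps_i)$ (note in passing that the unit relations give $d_v\,\abs{\log\abs{\eps_i}_v}\le d_L\h(\eps_i)$, without your factor $2$). This exceeds the stated $C=2d_L\max_{i\in I}\h(\eps_i)$ by roughly a factor of the unit rank, as you acknowledge, so the proposition as stated is not proved for fields of rank $\ge 2$ or $3$. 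The paper avoids the accumulation over basis vectors by working with the weighted embedding $\bigl((d_v/d_L)\log\abs{\cdot}_v\bigr)_{v\in\arch{L}}$ and arguing with fundamental domains rather than coordinates: each coordinate of $\Log(\eps_i)$ then lies in $[-\h(\eps_i),\h(\eps_i)]$, and the paper asserts that the cone $\Delta_s=\{t\colon t_v\ge -s\}$ intersected with the relevant hyperplane already contains a (translate of a) fundamental cell of the unit lattice for $s_{\min}=\max_{i\in I}\h(\eps_i)$; translating that cell to the coset of $\Log(x)$ produces the unit, and the two cases give $C=2m\,s_{\min}\le 2d_L\max_{i\in I}\h(\eps_i)$ with $m=\#\arch{L}\le d_L$. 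To recover the stated constant along your lines you would need such a covering/fundamental-domain argument (or a smarter choice of approximating lattice point), not coordinatewise rounding; alternatively, your argument proves the proposition verbatim only after weakening the explicit value of $C$ to $d_L\sum_{i\in I}\h(\eps_i)$.
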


\begin{proof}
  Let $m = \#\arch{L}$.  In~$\R^m$, we define the
  hyperplane~$H_s$ for $s\in \R$ as follows:
  \begin{displaymath}
    H_s = \{(t_1,\ldots,t_m)\in\R^m \st t_1+\cdots+t_m
    = s\}.
  \end{displaymath}
  We also define the convex cone~$\Delta_s$ as follows:
  \begin{displaymath}
    \Delta_s = \bigl\{(t_1,\ldots,t_m)\in\R^m\st  \forall
    i,\  t_i\geq - s\bigr\}.
  \end{displaymath}
  The image of~$\Z_L^\times$ under the logarithmic embedding
  \begin{displaymath}
    \Log = \Bigl(\dfrac{d_v}{d_L} \log |\cdot|_{v} \Bigr)_{v\in\arch{L}}
  \end{displaymath}
  is a full rank lattice~$\Lambda$ in~$H_0$.
  Let~$(\eps_i)_{1\leq i\leq m-1}$ be a basis of units in~$\Z_L$, and
  let~$V$ be the following fundamental cell of~$\Lambda$:
  \begin{displaymath}
    V = \Bigl\{\sum_{i=1}^{m-1} \lambda_i \Log(\eps_i) \st \lambda_i\in [-\tfrac12,\tfrac12] \text{ for all } i\Bigr\}.
  \end{displaymath}
  For each~$v\in \arch{L}$ and each $1\leq i\leq m-1$, we have
  \begin{displaymath}
    \dfrac{d_v}{d_L} \log\abs{\eps_i}_v \geq - \dfrac{d_v}{d_L}\log\max\{1, \abs{1/\eps_i}_v\} \geq -\h(1/\eps_i) = -h(\eps_i).
  \end{displaymath}
  Therefore~$V$ is included in~$H_0\cap \Delta_s$ for every
  $s\geq s_{\min} = \frac{1}{2}\sum_{i=1}^{m-1} \h(\eps_i)$. From this,
  we deduce:
  \begin{enumerate}
  \item \label{it:h-hmod-1} For every $s\geq ms_{\min}$, the set
    $H_{s}\cap \Delta_0$ contains a translate of~$V$; indeed its
    translate by $-s/m\cdot(1,\ldots,1)$ is
    $H_0 \cap \Delta_{s/m}$.
  \item \label{it:h-hmod-2} For every $s\geq 0$, the set
    $H_s\cap \Delta_{s_{\min}}$ contains a translate of~$V$; indeed
    its translate by $-s/m\cdot(1,\ldots,1)$ is
    $H_0\cap \Delta_{s_{\min} + s/m}$.
  \end{enumerate}

  Let $x\in\Z_L\backslash\{0\}$, and consider the point
  \begin{displaymath}
    \Log(x) = \Bigl(\dfrac{d_v}{d_L} \log |x|_{v}\Bigr)_{v\in\arch{L}} \in\R^m.
  \end{displaymath}
  The sum of its coordinates is $s_x = \hmod(x)$.  If
  $s_x\geq m s_{\min}$, then by~\eqref{it:h-hmod-1} there exists a
  unit~$\eps\in\Z_L^\times$ such that $\Log(x) + \Log(\eps)$ belongs
  to~$\Delta_0$. Then $\abs{\eps x}_{v}\geq 1$ for every
  $v\in\arch{L}$, so
  \begin{displaymath}
    \h(\eps x) = \hmod(\eps x) = \hmod(x)
  \end{displaymath}
  by \cref{lem:hmod-h}.
  
  On the other hand, if $0\leq s_x < m s_{\min}$, then
  by~\eqref{it:h-hmod-2} we can still find a unit~$\eps$ such that
  $\Log(x)+\Log(\eps) \in \Delta_{s_{\min}}$, in other words
  \begin{displaymath}
    \frac{d_v}{d_L} \log |\eps x|_v \geq -s_{\min}
  \end{displaymath}
  for all $v\in\arch{L}$. Then
  \begin{displaymath}
    \h(\eps x) = \sum_{v\in\arch{L}} \dfrac{d_v}{d_L} \log\max\{1,|\eps x|_v\} \leq
    \hmod(\eps x) + \sum_{v\in\V_\infty} s_{\min} \leq 2m s_{\min}.
  \end{displaymath}
  This proves the proposition with $C = 2m s_{\min} \leq 2d_L s_{\min}$.
\end{proof}

\begin{rem}
  \label{rem:CL1}
  We can give an explicit upper bound for an acceptable constant~$C$
  in \cref{prop:height-reduce} in terms of the degree and discriminant
  of~$L$ only. Let~$\mathfrak R_L$ be the regulator
  of~$L$. By~\cite[Lem.~1]{bugeaud_BoundsSolutionsUnit1996},~$L$
  admits a basis of units~$(\eps_i)_{1\leq i\leq m-1}$ (where
  $m = \#\arch{L}$) such that
  \begin{displaymath}
    \h(\eps_i) \leq \dfrac{((m-1)!)^2}{2^{m-1}d_L^{m-1}}
    \Bigl(\frac{\delta(L)}{d_L} \Bigr)^{2-m} \mathfrak R_L
  \end{displaymath}
  for each $1\leq i\leq m-1$; here $\delta(L)>0$ satisfies the
  property that all non-roots of unity in~$L$ have height at least
  $\delta(L)/d_L$. It is known that we can take
  $\delta(L) = \log(2)/d_L$ if $d_L\leq 2$, and
  \begin{displaymath}
    \delta(d_L) = \max\Bigl\{\dfrac{1}{53d_L\log(6d_L)},
    \frac14 \Bigl(\frac{\log\log d_L}{\log d_L}\Bigr)^3\Bigr\}
  \end{displaymath}
  otherwise \cite[§3]{bugeaud_BoundsSolutionsUnit1996}. (Lehmer's
  conjecture asserts that $\delta(L)$ can be chosen uniformly for all
  number fields~$L$). Moreover, the regulator of~$L$ is bounded above
  in terms of~$d_L$ and~$\Delta_L$. To see this, we use the main
  theorem of~\cite{sands_GeneralizationTheoremSiegel1991} and we note
  that
  \begin{enumerate}
  \item the class number of~$\Z_L$ is at least one,
  \item $L$ contains at
    most~$d_L \bigl(2+\log(d_L)/\log(2)\bigr)$ roots of unity.
  \end{enumerate}
  Therefore
  \begin{displaymath}
    \mathfrak R_L < d_L\Bigl(2+\frac{\log(d_L)}{\log(2)}\Bigr) \Bigl(\dfrac{4}{d_L-1}\Bigr)^{d_L-1} \abs{\Delta_L}^{1/2} (\log \abs{\Delta_L})^{d_L-1}.
  \end{displaymath}
  The final upper bound we obtain for the constant~$C$ in
  \cref{prop:height-reduce} grows at least linearly
  in~$\abs{\Delta_L}^{1/2}$ and exponentially in~$d_L$.
\end{rem}

\begin{cor}
  \label{cor:ideal-gen}
  Let~$C$ be as in~\cref{prop:height-reduce}. Then every principal
  ideal~$\mathfrak a$ of~$\Z_L$ admits a generator~$a\in \Z_L$ such
  that
  \begin{displaymath}
    \h(a) \leq \max \{C, \hmod(\mathfrak a)\}.
  \end{displaymath}
\end{cor}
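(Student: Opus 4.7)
The plan is to apply \cref{prop:height-reduce} directly to any generator of the ideal and observe that the resulting element is still a generator (since we only multiply by a unit) with a controlled height.

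More precisely, I would proceed as follows. First, pick any generator $x \in \Z_L \setminus \{0\}$ of the principal ideal $\mathfrak{a}$; such an $x$ exists by hypothesis. Second, invoke \cref{prop:height-reduce} to obtain a unit $\eps \in \Z_L^\times$ with
\begin{displaymath}
  \h(\eps x) \leq \max\{C, \hmod(x)\}.
\end{displaymath}
Set $a = \eps x$. Since $\eps$ is a unit in $\Z_L$, the element $a$ still generates $\mathfrak{a}$.

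It remains to identify $\hmod(x)$ with $\hmod(\mathfrak{a})$. This follows from the standard fact that for a nonzero element $x \in \Z_L$, the norm of the principal ideal $(x)$ equals $|\norm(x)|$, so
\begin{displaymath}
  \hmod(\mathfrak{a}) = \frac{1}{d_L}\log \norm(\mathfrak{a}) = \frac{1}{d_L}\log|\norm(x)| = \hmod(x).
\end{displaymath}
Substituting into the bound above gives $\h(a) \leq \max\{C, \hmod(\mathfrak{a})\}$, as required.

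There is no real obstacle here: the corollary is a straightforward packaging of \cref{prop:height-reduce} once one notes that multiplying a generator by a unit preserves the ideal and that $\hmod$ descends from elements to principal ideals. The only point worth a line of justification is the compatibility $\hmod(x) = \hmod(\mathfrak{a})$.
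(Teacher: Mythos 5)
Your proof is correct and is essentially the paper's own argument: the paper simply applies \cref{prop:height-reduce} to an arbitrary generator of $\mathfrak a$, and your spelled-out justification that multiplying by a unit preserves the ideal and that $\hmod(x)=\hmod(\mathfrak a)$ is exactly the implicit content of that one-line proof.
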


\begin{proof}
  Apply \cref{prop:height-reduce} with~$x$ an arbitrary generator
  of~$\mathfrak a$.
\end{proof}

This corollary allows us to bound the height of a common denominator
of a given polynomial $P\in L[X]$.

\begin{prop}
  \label{prop:int-denom}
  There exists a constant~$C'$ depending only on~$L$ such that for
  every $P\in L[X]$, there exists an element~$a\in \Z_L$ such that
  $aP\in\Z_L[X]$ and $\max\{\h(a), \h(aP)\} \leq \h(P) + C'$. We can
  take
  \begin{displaymath}
    C' = \max \{C,
    \max_{\mathfrak c\in \mathfrak C}
    \hmod(\mathfrak c)\}
  \end{displaymath}
  where~$\mathfrak C$ is a set of ideals in~$\Z_L$ that are
  representatives for the class group of~$L$, and~$C$ is the constant
  from \cref{prop:height-reduce}.
\end{prop}

\begin{proof}
  Let~$\mathfrak C$ and~$C$ be as above, and let $P\in L[X]$, which we
  may assume to be nonzero.  Let
  \begin{displaymath}
    \mathfrak a = \prod_{\p\in\primes{L}}\ \p^{\max\{0, - v_\p(P)\}}
  \end{displaymath}
  be the denominator ideal of~$P$. Then
  \begin{displaymath}
    \hmod(\mathfrak a) = \sum_{\p\in\primes{L}} \frac{d_\p}{d_L} \log \max\{1,
    |P|_\p\} \leq \h(P).
  \end{displaymath}
  Let~$\mathfrak c\in \mathfrak C$ be an ideal such
  that~$\mathfrak c\mathfrak a$ is principal. By \cref{cor:ideal-gen},
  if~$C$ denotes the constant from \cref{prop:height-reduce}, we can
  find a generator~$a$ of~$\mathfrak c\mathfrak a$ such that
  \begin{displaymath}
    \h(a)\leq \max \{C,
    \hmod(\mathfrak c\mathfrak a)\} \leq \hmod(\mathfrak a)+C'\leq \h(P) + C'.
  \end{displaymath}
  Then~$aP$ has integer coefficients, and we have
  \begin{align*}
    \h(aP) & \leq \sum_{v\in\arch{L}} \dfrac{d_v}{d_L}
            \bigl(\log\max\{1,|P|_v\}  + \log\max\{1,|a|_v\} \bigr) \\
          & = \h(P) + \h(a) - \sum_{v\in\narch{L}}\dfrac{d_v}{d_L}
            \log\max\{1,|P|_v\} \\
          & = \h(P) + \h(a) - \hmod(\mathfrak a) \phantom{\dfrac{d_v}{d_L}} \\
          &\leq \h(P) + C'. \qedhere
  \end{align*}
\end{proof}

\begin{rem}
  \label{rem:CL2}
  Minkowski's bound~\cite[§V.4]{lang_AlgebraicNumberTheory1994}
  implies that we can always choose~$\mathfrak C$ in such a way that
  \begin{displaymath}
    \max_{\mathfrak c\in \mathfrak C}
    \norm(\mathfrak c) \leq \abs{\Delta_L}^{1/2} \Bigl(\frac4\pi\Bigr)^{d_L/2} \frac{d_L!}{d_L^{d_L}}.
  \end{displaymath}
  Combined with \cref{rem:CL1}, this gives an upper bound on an
  acceptable~$C'$ in \cref{prop:int-denom} depending only on~$d_L$
  and~$\Delta_L$. Under the generalized Riemann hypothesis, a much
  sharper upper bound is available: we can choose~$\mathfrak C$ in
  such a way that
  \begin{displaymath}
    \max_{\mathfrak c\in \mathfrak C}
    \norm(\mathfrak c) \leq 12 \log(\abs{\Delta_L})^2
  \end{displaymath}
  by~\cite[Thm.~3]{bach_ExplicitBoundsPrimality1990}.
\end{rem}

\section{A naive height bound for fractions}
\label{sec:frac-basic}

Let~$L$ be a number field, and let~$F\in L(X)\nzero$ be a rational
fraction of degree at most~$d\geq 1$. Write $F=P/Q$ where~$P$ and~$Q$
are coprime polynomials in~$L[X]$, and let $d_P$ and~$d_Q$ be the
degrees of~$P$ and~$Q$ respectively. Let $x_i$ for
$1\leq i\leq d_P+d_Q+1$ be distinct elements in an interval
$\Zint{A,B}\subset\Z$ that are not poles of~$F$.

We recall the interpolation algorithm to reconstruct~$F$ given the
pairs $(x_i, F(x_i))$
\cite[§5.7]{vonzurgathen_ModernComputerAlgebra2013}.
Define~$S\in L[X]$ as the polynomial of degree at
most \mbox{$d_P+d_Q$} interpolating the points~$(x_i,
F(x_i))$. Let~$a\in \Z_L$ be a common denominator for the coefficients
of~$S$, so that $T = aS$ has coefficients in~$\Z_L$. We compute the
$d_P$-th
subresultant~\cite[§3]{elkahoui_ElementaryApproachSubresultants2003}
of~$T$ and the polynomial
\begin{displaymath}
  Z = \prod_{i=1}^{d_P+d_Q+1} (X - x_i)\in\Z[X],
\end{displaymath}
which is a polynomial~$R\in \Z_L[X]$ of degree at most~$d_P$; the
usual resultant is the $0$-th subresultant. We obtain a Bézout
relation~\cite[§3.2]{elkahoui_ElementaryApproachSubresultants2003} of
the form
\begin{displaymath}
  U T + V Z = R
\end{displaymath}
where $U, V, R\in \Z_L[X]$, and moreover $\deg(U)\leq d_Q$ and
$\deg(R)\leq d_P$. Then $F = R/aU$.

In order to obtain a bound on~$\h(F)$, we first bound~$\h(S)$ using
\cref{prop:poly-basic}. Then, we use the following well-known fact
about the size of subresultants in~$\Z_L[X]$.

\begin{lem}
  \label{lem:resultant}
  Let $P, Q\in \Z_L[X]\nzero$ be polynomials of degrees~$d_P$
  and~$d_Q$ respectively, and let $0\leq k \leq \min\{d_P, d_Q\}
  -1$. Let~$R$ be the~\mbox{$k$-th} subresultant of~$P$ and~$Q$, and
  let~$U$ and~$V$ be the associated Bézout coefficients. Write
  $s = d_P + d_Q$. Then we have
  \begin{align*}
    \h(R) &\leq (d_Q - k) \h(P) + (d_P - k) \h(Q) + \dfrac{s -
    2k}{2} \log(s - 2k), \\
    \h(U) &\leq (d_Q - k - 1) \h(P) + (d_P - k) \h(Q) \\
    &\qquad + \dfrac12(s -
    2k - 1) \log(s - 2k - 1), \quad \text{and}\\
    \h(V) &\leq (d_Q - k) \h(P) + (d_P - k - 1) \h(Q) \\
    &\qquad + \dfrac12(s -
    2k - 1) \log(s - 2k - 1).
  \end{align*}
\end{lem}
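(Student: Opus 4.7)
The plan is to use the classical determinantal representation of the $k$-th subresultant and its Bézout cofactors as minors of a Sylvester-like matrix built from shifts of the coefficient vectors of $P$ and $Q$, and then to bound these determinants place by place, summing the local contributions to recover the height bounds.

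First, I would invoke the standard fact (see \cite{elkahoui_ElementaryApproachSubresultants2003}) that every coefficient of $R$ as a polynomial in $X$ is expressible as the determinant of an $(s-2k) \times (s-2k)$ matrix whose rows are shifts of the coefficient vectors of $P$ and $Q$, with $d_Q - k$ rows of $P$-type and $d_P - k$ rows of $Q$-type. Analogously, the coefficients of $U$ and $V$ arise as determinants of matrices of size $s - 2k - 1$: the matrix for $U$ has $d_Q - k - 1$ $P$-rows and $d_P - k$ $Q$-rows, while the matrix for $V$ has $d_Q - k$ $P$-rows and $d_P - k - 1$ $Q$-rows.

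Second, I would estimate these determinants locally. At a nonarchimedean place $v$, the ultrametric inequality applied to the Leibniz formula gives
\begin{displaymath}
  \abs{R}_v \leq \abs{P}_v^{d_Q - k}\, \abs{Q}_v^{d_P - k}.
\end{displaymath}
At an archimedean place $v$, Hadamard's inequality, together with the bound $\sqrt{s-2k}\, \abs{P}_v$ on the Euclidean norm of each $P$-row and the analogous bound for $Q$-rows, yields
\begin{displaymath}
  \abs{R}_v \leq (s-2k)^{(s-2k)/2}\, \abs{P}_v^{d_Q - k}\, \abs{Q}_v^{d_P - k}.
\end{displaymath}
Setting $C_v = (s-2k)^{(s-2k)/2}$ for $v \in \arch{L}$ and $C_v = 1$ otherwise, one deduces
\begin{displaymath}
  \max\{1, \abs{R}_v\} \leq C_v\, \max\{1, \abs{P}_v\}^{d_Q - k}\, \max\{1, \abs{Q}_v\}^{d_P - k}.
\end{displaymath}

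Third, taking logarithms, weighting by $d_v/d_L$, and summing over all places of $L$, equation~\eqref{eq:sum-degrees} produces the stated bound on $\h(R)$, with the term $\frac{s-2k}{2}\log(s-2k)$ arising entirely from the archimedean contribution. The same argument applied to the size $(s-2k-1)$ matrices described above yields the claimed bounds for $\h(U)$ and $\h(V)$. The only real subtlety lies in the bookkeeping: one must keep track of which row is removed in the matrices defining $U$ and $V$ in order to obtain the correct asymmetry between the $\h(P)$ and $\h(Q)$ coefficients. Once the determinantal representations are pinned down, the Hadamard and ultrametric estimates combined with the place-by-place summation are entirely routine.
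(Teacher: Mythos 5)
Your proposal is correct and takes essentially the same approach as the paper: the determinantal (Sylvester-minor) representation of the $k$-th subresultant and its Bézout cofactors, Hadamard's inequality at the archimedean places, and summation of the local contributions weighted by $d_v/d_L$. The only cosmetic difference is that you treat the nonarchimedean places explicitly via the ultrametric inequality, whereas the paper leaves this implicit (the coefficients of $R$, $U$, $V$ lie in $\Z_L$, so those places contribute nothing to their heights).
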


For instance, \Cref{lem:resultant} allows one to bound coefficient
sizes in the subresultant version of the Euclidean algorithm
in~$\Q(X)$ \cite[§6.11]{vonzurgathen_ModernComputerAlgebra2013}.

\begin{proof}
  Let~$v\in \arch{L}$. By definition, every coefficient~$r$ of~$R$ has
  an expression as a determinant of size~$d_P+d_Q-2k$\,; its entries
  in the first~$d_Q - k$ columns are coefficients of~$P$, and its
  entries in the last~$d_P - k$ columns are coefficients of~$Q$. By
  Hadamard's
  lemma~\cite[Thm.~16.6]{vonzurgathen_ModernComputerAlgebra2013}, we
  can bound~$\abs{r}_v$ by the product of $L^2$-norms of the columns
  of this determinant in the absolute value~$v$. Hence
  \begin{displaymath}
    \abs{r}_v\leq \bigl(\sqrt{d_P + d_Q - 2k}\ \abs{P}_v \bigr)^{d_Q - k} 
    \bigl(\sqrt{d_P + d_Q
      - 2k}\ \abs{Q}_v \bigr)^{d_P - k}.
  \end{displaymath}
  Taking logarithms and summing over~$v$, we obtain the desired height
  bound on~$R$.  Similarly, the coefficients of~$U$ (resp.~$V$) are
  determinants of size \mbox{$d_P + d_Q - 2k - 1$}, where one
  column less contains coefficients of~$P$ (resp.~$Q$).
\end{proof}

\begin{prop}
  \label{prop:frac-basic-L}
  Let~$L$ be a number field, and let $\Zint{A, B}\subset \Z$. Write
  $D = B-A$ and $M = \max\{|A|,|B|\}$.  Let $F\in L(X)\nzero$ be a
  rational fraction of degree~$d\geq 1$. Let~$d_P$ and~$d_Q$ be the
  degrees of its numerator and denominator respectively. Let~$x_i$ for
  $1\leq i\leq d_P+d_Q+1$ be distinct elements of $\Zint{ A, B}$ that
  are not poles of~$F$, and assume that
  $\h(F(x_i))\leq \nolinebreak H$ for every~$i$.  Then there exist
  polynomials~$P,Q\in \Z_L[X]$ such that $F = P/Q$, $\deg P = d_P$,
  $\deg Q = d_Q$, and
  \begin{align*}
    \max\{h(P),\,h(Q)\} \leq\ &(d+1)(2d+1)H + (d+1)D\log(D) \\
    &+ (4d^2+3d)\log(2M)
    \\
    &+ (2d+2)\log(2d+1) + (d+1) C,
  \end{align*}
  where~$C$ is the constant from~\cref{prop:int-denom}.
\end{prop}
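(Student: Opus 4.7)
The plan is to execute the subresultant-based interpolation algorithm described in the paragraphs preceding the statement, and to track the height growth at each stage. First, since $S$ has degree at most $d_P + d_Q \le 2d$ and we have $d_P + d_Q + 1 \le 2d+1$ evaluation points, \cref{prop:poly-basic}(\ref{it:poly-global}) gives a bound on $\h(S)$ of shape $(2d+1)H + D\log D + 2d\log(2M) + \log(2d+1)$. Next, \cref{prop:int-denom} supplies a denominator $a \in \Z_L$ with $T = aS \in \Z_L[X]$ and $\max\{\h(a), \h(T)\} \le \h(S) + C$.

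Separately, I bound $\h(Z)$, where $Z = \prod_{i}(X - x_i) \in \Z[X]$. Expanding gives $|Z|_v \le (2M)^{d_P + d_Q + 1}$ at every archimedean $v$, and since $Z \in \Z[X]$ the nonarchimedean contribution vanishes, so $\h(Z) \le (2d+1) \log(2M)$. With these in hand, I apply \cref{lem:resultant} to $(T, Z)$ at level $k = d_P$, producing $R, U \in \Z_L[X]$ with $\deg R \le d_P$, $\deg U \le d_Q$, a Bézout relation $UT + VZ = R$, and height bounds that are linear in $\h(T)$ and $\h(Z)$ with leading coefficients $(d_Q + 1)$ and $d_Q$. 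I then set $P = R$ and $Q = aU$, using $\h(Q) \le \h(a) + \h(U)$. The identity $F = R/(aU)$ follows because $aP - TQ$ vanishes at every $x_i$, so $Z \mid (aP - TQ)$; reducing $UT \equiv R \pmod Z$ and multiplying by $Q$ gives $RQ \equiv aUP \pmod Z$, and since both sides have degree $<\deg Z$, equality holds in $L[X]$. If $\deg R$ or $\deg aU$ drops below the nominal bound, the relation $RQ = aUP$ with $\gcd(P,Q) = 1$ forces $R = \lambda P$ and $aU = \lambda Q$ for some $\lambda \in L$, and rescaling returns a representation with the required degrees.

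The main obstacle is purely bookkeeping. The $(d_Q + 1)$ factor from \cref{lem:resultant} multiplies $\h(T) \le \h(S) + C$, producing the $(d+1)(2d+1)H$, $(d+1)D\log D$, and $(d+1)C$ terms; the archimedean pieces from $(d+1)\cdot 2d\log(2M)$ and from $d_Q\h(Z) \le d(2d+1)\log(2M)$ combine into $(4d^2 + 3d)\log(2M)$; and the two $\log(2d+1)$ contributions collect to $(2d+2)\log(2d+1)$. Substituting $d_P, d_Q \le d$ throughout and summing then yields the stated inequality.
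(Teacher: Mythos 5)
Your proposal is correct and takes essentially the same route as the paper: interpolate to obtain $S$, clear denominators via \cref{prop:int-denom}, bound $\h(Z)$, apply \cref{lem:resultant} to $(T,Z)$ at index $k=d_P$, set $F=R/(aU)$, and carry out the same bookkeeping, which indeed produces the coefficients $(d+1)(2d+1)$, $(4d^2+3d)$ and $(2d+2)$. The only difference is that you spell out the congruence argument for $F=R/(aU)$ (leaving aside the degenerate possibility $R=aU=0$), where the paper simply invokes the cited rational-reconstruction algorithm.
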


\begin{proof}
  Let~$S, a, T, Z, R, U, $ and~$V$ be as above; to choose~$a$, we use
  \cref{prop:int-denom}, so that
  \begin{displaymath}
    \max\{\h(a), \h(T)\}\leq \h(S)+C.
  \end{displaymath}
  By \cref{prop:poly-basic}, we have
  \begin{equation}
    \label{eq:frac-basic-S}    
    \h(S) \leq (2d+1)H + D\log(D) + 2d\log(2M) +
    \log(2d+1). 
  \end{equation}
  The archimedian absolute values of the coefficients of~$Z$ are
  bounded above by~$(2M)^{2d+1}$, hence
  \begin{displaymath}
    \h(Z)\leq (2d+1)\log(2M).
  \end{displaymath}
  By \cref{lem:resultant}, we have
  \begin{align*}
    \h(R) &\leq (d+1)\h(T) + d(2d+1)\log(2M) +
           \dfrac{2d+1}{2}\log(2d+1), \quad \text{and} \\
    \h(U) &\leq d \h(T) + d(2d+1) \log (2M) + d \log(2d+1).
  \end{align*}
  Then $F = R/aU$, and
  \begin{align*}
    \max\{\h(R),\h(aU)\} &\leq \max\{\h(R), \h(a)+\h(U)\} \\
                         &\leq (d+1)(\h(S)+C) + d(2d+1) \log(2M)\\
    &\qquad + \frac{2d+1}{2}\log(2d+1).
  \end{align*}
  Using the upper bound~\eqref{eq:frac-basic-S} on~$\h(S)$ ends the
  proof.
\end{proof}

The bound we obtain on~$\h(F)$ in \cref{prop:frac-basic-L} is
roughly~$O(d^2H)$. This motivates a result like~\cref{thm:main-frac},
where the dependency on~$H$ is only linear.

\section{Preparations for the proof of~\texorpdfstring{\cref{thm:main-frac}}{Theorem~1.2}}
\label{sec:frac-lem}

In this section, we state preparatory lemmas for the proof of
\cref{thm:main-frac}; the reader might wish to skip them until their
use in the proof becomes apparent.

We keep the notation introduced at the beginning
of~§\ref{sec:heights}, to which we add the following.
If~$\p\in \primes{L}$, we denote by~$v_\p$ the $\p$-adic valuation
on~$L$, with the convention that~$v_\p(0) = +\infty$. When
considering~$\p$ as a finite place of~$L$, we write $\abs{\cdot}_\p$
for the associated absolute value. We denote by~$d_\p$ and~$e_\p$ the
local degree and ramification index of~$\p$ in the
extension~$L/\Q$. With our normalizations, the following formula holds
for every~$x\in L$ and~$\p\in \primes{L}$:
\begin{displaymath}
  \abs{x}_\p = \norm(\p)^{-v_\p(x)/d_\p}.
\end{displaymath}
Finally, for~$r\in \R$, we denote the upper integral part of~$r$ by~$\ceil{r}$.

\begin{lem}
  \label{lem:subinterval}
  Let $\Zint{A,B}\subset\Z$, let $D = B-A$, and let $\eta\geq 1$;
  assume that~$D\geq 2\eta$. Let~$S$ be a subset of~$\Zint{ A, B} $
  containing at least~$D/\eta$ elements, and let\,
  $1\leq k\leq \frac{D}{2\eta}$ be an integer. Then there exists a
  subinterval of $\Zint{A, B}$ of length at most~$\ceil{2\eta k}$
  containing at least $k+1$ elements of $S$.
\end{lem}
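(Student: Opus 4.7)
My plan is to work with the sorted elements of $S$ and apply an averaging argument to sliding windows of $k+1$ consecutive elements of $S$, rather than a pigeonhole on a disjoint partition of $\Zint{A,B}$ (which is slightly too wasteful to produce the sharp constant $\ceil{2\eta k}$). Write $S = \{s_1 < s_2 < \cdots < s_n\}$ with $n = \#S \geq D/\eta$, and set $g_j = s_{j+1} - s_j$ for $1 \leq j \leq n-1$; these are positive integers summing to $s_n - s_1 \leq D$.

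The key reduction is that for $1 \leq i \leq n-k$, the window sum
\begin{displaymath}
  W_i := g_i + g_{i+1} + \cdots + g_{i+k-1} = s_{i+k} - s_i
\end{displaymath}
is the length of the subinterval $\Zint{s_i, s_{i+k}} \subset \Zint{A,B}$, which by construction contains the $k+1$ elements $s_i, s_{i+1}, \ldots, s_{i+k}$ of $S$. So it suffices to show that $\min_i W_i \leq \ceil{2\eta k}$.

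To bound this minimum, observe that each gap $g_j$ appears in at most $k$ of the $n-k$ windows, so
\begin{displaymath}
  \sum_{i=1}^{n-k} W_i \leq k \sum_{j=1}^{n-1} g_j = k(s_n - s_1) \leq kD,
\end{displaymath}
whence $\min_i W_i \leq kD/(n-k)$. The two hypotheses $n \geq D/\eta$ and $k \leq D/(2\eta)$ combine to give $n - k \geq D/\eta - D/(2\eta) = D/(2\eta)$, so $\min_i W_i \leq 2\eta k \leq \ceil{2\eta k}$, as required.

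The argument is elementary once the sliding-window reformulation is adopted; the only slightly subtle point is that the hypothesis $k \leq D/(2\eta)$ is \emph{exactly} what is needed to make the averaging bound $kD/(n-k)$ come out below $2\eta k$, so the constant $\ceil{2\eta k}$ in the statement is tight for this proof technique. One should also note that at least one window exists, i.e.\ $n \geq k+1$, which is automatic from $n \geq D/\eta \geq 2k \geq k+1$.
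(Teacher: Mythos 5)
Your proof is correct, but it follows a different route from the paper. You sort $S$, form the sliding windows $\Zint{s_i,s_{i+k}}$, and run an averaging argument: each gap is counted in at most $k$ windows, so the average window length is at most $kD/(n-k)\leq 2\eta k$ by the hypotheses $n\geq D/\eta$ and $k\leq D/(2\eta)$; the check $n\geq k+1$ guaranteeing a window exists is also in order. The paper instead argues by contradiction with a pigeonhole on a \emph{fixed} partition of $\Zint{A,B}$ into at most $\ceil{D/(2\eta k)}$ intervals of length at most $\ceil{2\eta k}$: if each contained at most $k$ points of $S$, then $D/\eta\leq \#S\leq k\ceil{D/(2\eta k)}<D/(2\eta)+k$, contradicting $k\leq D/(2\eta)$. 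One remark on your motivating aside: the partition pigeonhole is \emph{not} too wasteful to reach the constant $\ceil{2\eta k}$ --- the strict inequality $\ceil{x}<x+1$ combined with $k\leq D/(2\eta)$ closes the gap exactly, which is what the paper exploits. What your version buys is a marginally stronger quantitative conclusion (a window of length at most $kD/(n-k)$, hence at most $\lfloor 2\eta k\rfloor$, with no ceiling needed) and no case analysis about how to chop up $\Zint{A,B}$; what it costs is a slightly longer setup than the paper's three-line contradiction.
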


\begin{proof}
  Let~$m\in \Z$ such that $m\geq 1$. Then for each~$n\geq 1$, the
  following intervals of~$\Z$:
  \begin{displaymath}
    \Zint{0,m},\ \Zint{m+1,2m+1},\ \ldots,\ \Zint{(n-1)(m+1), n(m+1)-1}
  \end{displaymath}
  form a partition of~$\Zint{0, n(m+1)-1}$ in~$n$ intervals of
  length~$m$. Taking \mbox{$m = \ceil{2\eta k}$}
  and~$n = \ceil{D/(2\eta k)}$, the right endpoint of the latter
  interval is at least~$D$. Therefore, by translating the above
  partition and intersecting it with~$\Zint{A,B}$, we obtain a
  partition of~$\Zint{A, B}$ in at most $\ceil{D/(2\eta k)}$ intervals
  of length at most $\ceil{2\eta k}$. In the case that each of these
  intervals contains at most~$k$ elements of~$S$, we deduce that
  \begin{displaymath}
    \frac{D}{\eta} \leq \#S \leq k \ceil{\dfrac{D}{2\eta k}} < 
      \dfrac{D}{2\eta} + k.
  \end{displaymath}
  This is absurd because $k \leq \frac{D}{2\eta}$.
\end{proof}

\begin{lem}
  \label{lem:sum-primes-L}
  Let $R\in \Z_L\nzero$ be a non-unit. Then
  \begin{displaymath}
    \sum_{\substack{\p\in \primes{L},\,\p|R \\ \p|p\in
        \primes{\Q}}} \dfrac{e_\p\log (\norm(\p))}{p-1} \leq d_L (2\log \log \abs{\norm(R)} + 4).
  \end{displaymath}
\end{lem}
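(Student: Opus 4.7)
The plan is to reduce the sum to one over rational primes dividing $n = |\norm(R)|$, and then to invoke explicit forms of Chebyshev's and Mertens' theorems on the resulting classical sum.

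First, since $\norm(\p) = p^{f(\p|p)}$, regrouping by the rational prime $p$ below $\p$ gives
\[
  \sum_{\p | R} \frac{\log \norm(\p)}{p-1}
  = \sum_{p \in \primes{\Q}} \frac{\log p}{p-1} \sum_{\substack{\p | R \\ \p | p}} f(\p|p).
\]
The identity $\sum_{\p | p} e(\p|p) f(\p|p) = d_L$ implies $\sum_{\p | p} f(\p|p) \le d_L$, while the inner sum on the right is nonzero exactly when $p \mid n$. Hence it suffices to prove the rational-prime bound
\[
  \sum_{p | n} \frac{\log p}{p-1} \le 2 \log \log n + 3.5.
\]

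Next, the function $p \mapsto \tfrac{\log p}{p-1}$ is decreasing on $[2,\infty)$. If $q_1 < \cdots < q_r$ are the distinct prime divisors of $n$ and $p_1 < p_2 < \cdots$ denote the rational primes in order, then $q_i \ge p_i$, so
\[
  \sum_{p | n} \frac{\log p}{p-1} \le \sum_{p \le p_r} \frac{\log p}{p-1}.
\]
Moreover $p_1 \cdots p_r \le q_1 \cdots q_r \le n$, whence $\theta(p_r) \le \log n$. An explicit Chebyshev lower bound $\theta(x) \ge \alpha x$ (valid above some explicit threshold) then translates this into $p_r \le (\log n)/\alpha$, and an explicit Mertens-type estimate $\sum_{p \le x} \tfrac{\log p}{p-1} \le \log x + \beta$ gives
\[
  \sum_{p | n} \frac{\log p}{p-1} \le \log \log n + \beta - \log \alpha.
\]

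The main obstacle is to pin down Rosser--Schoenfeld-type constants $\alpha$ and $\beta$ explicitly, and to handle the finitely many cases where $n$ is small enough that $p_r$ falls below the effective threshold of the Chebyshev bound: these can be cleared by direct numerical inspection, since $\sum_{p \le x_0} \tfrac{\log p}{p-1}$ is then just a fixed constant. The generous factor $2$ in front of $\log\log n$ and the additive term $3.5$ are deliberately wasteful and absorb both $\beta - \log\alpha$ and any slack in the small-$n$ regime, so no fine-tuning of the analytic constants is needed.
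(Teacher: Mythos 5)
Your argument is correct, and it diverges from the paper's proof at the key quantitative step. Both proofs share the same skeleton: reduce to $L=\Q$ by grouping the primes $\p\mid R$ over the rational prime $p$ below them and using $\sum_{\p\mid p} f(\p\mid p)\leq d_L$, then exploit the monotonicity of $p\mapsto \log(p)/(p-1)$ to replace the prime divisors of $n=\abs{\norm(R)}$ by the first $r$ primes, and finish with a Mertens-type estimate. The difference is how the largest prime in play is controlled: the paper bounds the \emph{number} $m$ of prime factors by $\log(n)/\log 2$ and then invokes Rosser's upper bound $p_m< m\log m+m\log\log m$ (hence $p_m\leq m^2$), which is exactly where its factor $2$ in front of $\log\log n$ comes from; you instead use $\theta(p_r)=\sum_{i\leq r}\log p_i\leq \log n$ together with an explicit Chebyshev lower bound $\theta(x)\geq \alpha x$, which gives $\log p_r\leq \log\log n-\log\alpha$ and hence the sharper bound $\log\log n+O(1)$, comfortably inside the stated $2\log\log n+3.5$. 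The price is a dependence on Rosser--Schoenfeld-type constants and a finite verification below the Chebyshev threshold; note that in that regime you should still use $\log n\geq \theta(p_r)$ rather than only $n\geq 2$, since for instance $\sum_{p\leq 101}\log(p)/(p-1)\approx 4$ exceeds $2\log\log 2+3.5\approx 2.8$, but the check is immediate once phrased this way ($p_r\leq 5$ gives a left-hand side below $1.7$, while $p_r\geq 7$ forces $\log n\geq \log 210$). So your route is sound and in fact yields an asymptotically stronger inequality, at the cost of heavier explicit analytic input than the paper's elementary combination of Mertens' theorem with Rosser's bound on the $m$-th prime.
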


\begin{proof}
  First, we assume that $L=\Q$, so that $R\in\Z$ and~$\abs{R}\geq 2$.
  Let~$m$ be the number of prime factors in~$R$, and let~$(p_i)$ be
  the sequence of prime numbers in increasing order. It is enough to
  prove the claim for the integer~$R' = \prod_{i=1}^m p_i$, which has
  both a greater left hand side, since~$\log(p)/(p-1)$ is a decreasing
  function of~$p$, and a smaller right hand side, since
  $R'\leq \abs{R}$. We can assume that $m\geq 2$. Then
  \begin{displaymath}
    \sum_{i=1}^m \dfrac{\log(p_i)}{p_i - 1} =
    \sum_{i=1}^m \dfrac{\log(p_i)}{p_i} + \sum_{i=1}^m \dfrac{\log(p_i)}{p_i(p_i-1)}
    \leq \log(p_m) + 3
  \end{displaymath}
  by Mertens's first theorem
  \cite{mertens_BeitragZurAnalytischen1874}, and because the sum of
  the second series is less than~$0.76$.
  By~\cite{rosser_ExplicitBoundsFunctions1941}, we have
  $p_m < m\log m + m\log \log m$ if $m\geq 6$; thus the rough bound
  $p_m \leq m^2$ holds. Since $m\leq \log(R')/\log(2)$, the result in
  the case~$L=\Q$ follows.
  
  In the general case, if $\p|R$ lies above~$p$, then~$p$
  divides~$\norm(R)$, and $\abs{\norm(R)}\geq 2$. We apply
  \cref{lem:sum-primes-L} to~$\norm(R)\in \Z$: hence
  \begin{align*}
    \sum_{\p|R} \dfrac{e_\p\log(\norm(\p))}{p-1}
    &\leq \sum_{p|N_{L/\Q}(R)}
      \dfrac{\sum_{\p|p} e_\p
      \log(\norm(\p))}{p-1} \\
    &=
      d_L\sum_{p|N_{L/\Q}(R)}
      \dfrac{\log(p)}{p-1} \\
    &\leq d_L (2\log\log\abs{\norm(R)}+ 4). \qedhere
  \end{align*}
\end{proof}

\begin{lem}
  \label{lem:padic-L}
  Let $\p\in \primes{L}$ be a prime ideal lying over
  $p\in\primes{\Q}$, and let~$L_\p$ be the~$\p$-adic completion
  of~$L$. Let $Q\in L_\p[X]$ be a polynomial of degree~$d\geq 0$, and
  assume that $v_\p(Q) = 0$. Let $x_1,\ldots, x_n$ be distinct values
  in $\Zint{A,B}$, and write $D = B-A$; assume that $D\geq 1$. Let
  $\beta\in \N$. Then
  \begin{equation}
    \label{eq:padic}
    \sum_{i=1}^n \min\{\beta, v_\p(Q(x_i))\} \leq
    d \left(\beta +  \dfrac{d_\p\log(D)}{\log \norm(\p)} + \dfrac{e_\p D}{p-1}\right).
  \end{equation}
\end{lem}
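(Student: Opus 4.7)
The plan is to work over the algebraic closure of the completion $L_\p$, decompose $Q$ into linear factors, and bound each factor's contribution by a pigeonhole argument on residue classes of the evaluation points modulo powers of $p$.

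Extend $v_\p$ (with its standard normalization $v_\p|_\Q = v_p$) to $\overline{L_\p}$ and factor $Q(X) = c_d\prod_{j=1}^d(X-\alpha_j)$, so that for any $x\in\Z$ viewed inside $\mathcal{O}_{L_\p}$,
$$v_\p(Q(x)) = v_\p(c_d) + \sum_{j=1}^d v_\p(x-\alpha_j).$$
For each root with $v_\p(\alpha_j) < 0$, the ultrametric inequality gives $v_\p(x-\alpha_j)=v_\p(\alpha_j)$ independently of $x$. The hypothesis $v_\p(Q)=0$ together with a Newton polygon calculation — the minimum of the polygon is attained at abscissa $\#\{j:v_\p(\alpha_j)>0\}$ and equals $v_\p(Q)=0$ — yields the identity $v_\p(c_d)+\sum_{v(\alpha_j)<0}v_\p(\alpha_j)=0$. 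Setting $J=\{j:v_\p(\alpha_j)\geq 0\}$, this simplifies to $v_\p(Q(x))=\sum_{j\in J}v_\p(x-\alpha_j)$ with every summand nonnegative and $|J|\leq d$.

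Applying the subadditivity $\min(\beta,a+b)\leq \min(\beta,a)+\min(\beta,b)$ valid for $a,b\geq 0$ (iterated), the problem reduces to the per-root estimate: for each $\alpha\in\overline{L_\p}$ with $v_\p(\alpha)\geq 0$, bound
$$T(\alpha) := \sum_{i=1}^n \min\{\beta,v_\p(x_i-\alpha)\} = \int_0^\beta N(t)\,dt, \qquad N(t):=\#\{i:v_\p(x_i-\alpha)\geq t\}.$$
If $x_i,x_{i'}$ both satisfy $v_\p(x_\cdot-\alpha)\geq t$, then $v_\p(x_i-x_{i'})\geq t$; since $v_\p|_\Q=v_p$ and $x_i-x_{i'}\in\Z$, this forces $p^{\lceil t\rceil}\mid(x_i-x_{i'})$, so $N(t)\leq 1+\lfloor D/p^{\lceil t\rceil}\rfloor$. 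Integrating over $[0,\beta]$ and using the geometric series $\sum_{k\geq 1}p^{-k}=1/(p-1)$ gives $T(\alpha)\leq \beta + D/(p-1)$. Summing over the at most $d$ roots in $J$ produces $d(\beta+D/(p-1))$, which is majorized by the stated bound $d(\beta+d_L\log D/\log N(\p)+D/(p-1))$ since the extra term is nonnegative for $D\geq 1$.

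The only delicate step is the Newton polygon identity in the first paragraph — correctly identifying the vertex of the polygon and computing its height in terms of $v_\p(c_d)$ and the negative-valuation roots; the rest is a routine ultrametric and pigeonhole calculation, and in fact the bound obtained via this argument is slightly stronger than the one stated in the lemma, so the logarithmic term $d_L\log D/\log N(\p)$ in the statement is absorbed for free.
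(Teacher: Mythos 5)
Your proof is correct, and it shares the paper's skeleton: factor $Q$ over an algebraic closure of $L_\p$, use the hypothesis $v_\p(Q)=0$ to discard the roots outside the closed unit ball (your Newton polygon identity is the multiplicativity of the Gauss norm; the paper only needs, and only proves, the inequality $\abs{\lambda}_\p\prod_{\abs{\alpha_j}_\p>1}\abs{\alpha_j}_\p\geq 1$, which already gives $v_\p(Q(x))\leq\sum_{j\in J}v_\p(x-\alpha_j)$), and then count evaluation points lying in $\p$-adic disks around the remaining roots via congruences modulo powers of $p$ — note that both you and the paper's own argument use the normalization $v_\p|_\Q=v_p$, which you at least make explicit. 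Where you genuinely diverge is the bookkeeping of the truncation at $\beta$: the paper sets aside at most $d$ exceptional indices $i$ with $v_\p(x_i-\alpha_j)>k$ for $p^k\leq D<p^{k+1}$ (paying $d\beta$ for them) and then integrates the disk counts only up to level $k$, which is what produces the term $d\,d_L\log(D)/\log\norm(\p)$; you instead push $\min\{\beta,\cdot\}$ through the sum over roots by subadditivity and integrate the count $N(t)\leq 1+\lfloor D/p^{\lceil t\rceil}\rfloor$ over all of $[0,\beta]$, so the ``$+1$'' absorbs the near-root points and the geometric series gives $D/(p-1)$ per root. This yields the cleaner and slightly stronger bound $d\bigl(\beta+D/(p-1)\bigr)$, which implies the stated inequality since the logarithmic term is nonnegative; so your route is a modest but real improvement in the constant structure, at no cost in rigor.
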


\begin{proof}
  We can assume that~$d\geq 1$.  Let~$\lambda$ be the leading
  coefficient of~$Q$, and let $\alpha_1,\ldots,\alpha_{d}$ be the
  roots of~$Q$ in an algebraic closure of~$L_\p$, where we
  extend~$\abs{\cdot}_\p$ and~$v_\p$. Up to reindexation, we may assume
  that $\abs{\alpha_j}_\p \leq 1$ for $1\leq j\leq t$, and
  $\abs{\alpha_j}_\p > 1$ for $t+1\leq j\leq d$. For every~$i$, we
  have
  \begin{displaymath}
    \abs{{Q}(x_i)}_{\p} = \abs{\lambda}_\p \prod_{i=1}^{d} \abs{x_i -
      \alpha_j}_{\p} =
      \biggl(\abs{\lambda}_\p \prod_{j = t+1}^{d}
    \abs{\alpha_j}_\p \biggr) \prod_{j=1}^t \abs{{x_i - \alpha_j}}_{\p}.
  \end{displaymath}
  Since~$v_\p(Q)=0$, we have
  \begin{displaymath}
    \biggl(\abs{\lambda}_\p \prod_{j = t+1}^{d}
    \abs{\alpha_j}_\p \biggr) = 1.
  \end{displaymath}
  Therefore, for each~$1\leq i\leq n$,
  \begin{displaymath}
    v_\p({Q}(x_i)) =
    \sum_{j=1}^t v_\p(x_i -
    \alpha_j). 
  \end{displaymath}

  Let $k\in\N$ be such that $p^{k} \leq D < p^{k+1}$.  Since the~$x_i$
  are all distinct modulo~$p^{k+1}$, there exist at most~$d$ values
  of~$i$ such that $v_\p(x_i - \alpha_j) > k e_\p$ for some~$j$. For
  these indices~$i$, we bound $\min\{\beta, v_\p(Q(x_i))\}$ from above
  by~$\beta$. This accounts for the term~$d\beta$ in
  inequality~\eqref{eq:padic}.

  For all other values of~$i$ (say~$i\in I$), we have
  $v_\p(x_i - \alpha_j) \leq ke_\p$ for every $1\leq j\leq t$. For
  each~$1\leq w\leq ke_\p$ and $1\leq j\leq t$, define
  \begin{displaymath}
    S_{j,w} = \{i \in I\st v_\p(x_i-\alpha_j) \geq w\}.
  \end{displaymath}
  For fixed~$j$ and~$w$, all the values~$x_i$ for~$i\in S_{j,w}$
  coincide modulo $p^{\ceil{w/e_\p}}$, so
  \begin{displaymath}
    \# S_{j,w} \leq \ceil{\frac{D}{p^{\ceil{w/e_\p}}}}.
  \end{displaymath}
  Note that for all~$i\in I$ and~$1\leq j\leq t$, the number of
  values of $w\in \Zint{1, ke_\p}$ such that $i\in S_{j,w}$ is
  precisely~$v_\p(x_i-\alpha_j)$. Therefore,
  \begin{align*}
    \sum_{i\in I} v_\p({Q}(x_i))
    &= \sum_{i\in I} \sum_{j=1}^t
      v_\p(x_i - \alpha_j) \\
    &= \sum_{j=1}^t \sum_{w=1}^{ke_\p} \# S_{j,w} \\
    &\leq  d\sum_{w=1}^{k e_\p}\Bigl(\frac{D}{p^{\ceil{w/e_\p}}}+1\Bigr)\\
    &= d e_\p \sum_{w=1}^k \Bigl(\frac{D}{p^w}+1\Bigr) \\
    &\leq d e_\p k + \frac{d e_\p D}{p-1}.
  \end{align*}
  Since
  \begin{displaymath}
    k\leq \frac{\log(D)}{\log(p)} = \frac{d_\p}{e_\p}\cdot \frac{\log(D)}{\log\norm(\p)},
  \end{displaymath}
  this accounts for the two remaining terms in
  inequality~\eqref{eq:padic}.
\end{proof}

\section{Heights of fractions from their values}
\label{sec:frac-proof}

This final section is devoted to the proof of~\cref{thm:main-frac} and
its corollary. We keep the notation from~§\ref{sec:frac-lem}, and recall
the main statement for the reader's convenience.

\begin{thm}
  \label{thm:main-frac-L}
  \mainstatement
\end{thm}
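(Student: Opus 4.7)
The plan is to decompose $h(F)$ into an archimedean part — controlled via the local statement of Theorem~\ref{thm:main-poly-proved} applied separately to the numerator and denominator of $F$ — plus a nonarchimedean ``cancellation'' term, controlled via Lemmas~\ref{lem:padic-L} and~\ref{lem:sum-primes-L}. I first write $F = P/Q$ with $P, Q \in L[X]$ coprime; using Proposition~\ref{prop:int-denom}, I rescale $P, Q$ by a common element of $L^\times$ so that $P, Q \in \Z_L[X]$. Since $h((P:Q))$ is invariant under common scaling of the pair, no loss in height is incurred.

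Applying the local statement of Theorem~\ref{thm:main-poly-proved} separately to $P$ and $Q$ (both of degree $\leq d$) at each archimedean place $v$, taking the pointwise maximum of the two resulting bounds, and summing over $v \in \arch{L}$ weighted by $d_v/d_L$ with $N = \#S$ evaluation points, yields
\begin{displaymath}
  h(F) \leq d\log(2M) + \log(d+1) + \tfrac{1}{N-d}\sum_{x \in S}\sum_{v \in \arch{L}}\tfrac{d_v}{d_L}\log\max(1, |P(x)|_v, |Q(x)|_v),
\end{displaymath}
where one uses $\log\max(1, |P|_v, |Q|_v) \geq \log\max(|P|_v, |Q|_v)$ at each $v$ and the fact that the nonarchimedean contribution to $h(F) = \sum_v (d_v/d_L)\log\max(|P|_v, |Q|_v)$ is nonpositive. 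Let $\mathfrak g(x) = (P(x)) + (Q(x)) \subset \Z_L$ be the ``cancellation ideal.'' Since $h(F(x)) = h((P(x) : Q(x))) = \sum_v (d_v/d_L)\log\max(|P(x)|_v, |Q(x)|_v)$ and the nonarchimedean contribution on the right equals $-\hmod(\mathfrak g(x))$, the inner archimedean sum at each $x$ equals $h(F(x)) + \hmod(\mathfrak g(x))$ up to a nonnegative ``$\max(1,\cdot)$ versus $\max(\cdot)$'' correction that is absorbed into the $O_L(1)$ error. Together with $N/(N-d) = 1 + O(d/N)$ which is very close to $1$ under $D \geq \eta d^3 H$, this reduces the problem to bounding $\frac{1}{N-d}\sum_{x \in S}\hmod(\mathfrak g(x))$.

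For this, observe that $\mathfrak g(x)$ divides the resultant $R = \Res(P, Q) \in \Z_L\setminus\{0\}$ via the Sylvester-matrix Bézout identity, so only primes $\p | R$ contribute. For each such $\p$, Lemma~\ref{lem:padic-L} applied to $Q$ with $\beta = v_\p(R)$ (using $v_\p(\mathfrak g(x)) \leq \min(v_\p(R), v_\p(Q(x)))$) gives
\begin{displaymath}
  \sum_{x \in S} v_\p(\mathfrak g(x)) \leq d\bigl(v_\p(R) + d_L \tfrac{\log D}{\log\norm(\p)} + \tfrac{D}{p-1}\bigr).
\end{displaymath}
Weighting by $f_\p\log p/d_L$ and summing over $\p | R$, the three terms contribute $d\,\hmod(R)$, $d\,\omega(R)\log D$ (with $\omega(R)$ the number of distinct prime ideals dividing $R$), and, by Lemma~\ref{lem:sum-primes-L}, $dD\cdot O_L(\log\log|\norm(R)|)$, respectively. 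The quantity $\hmod(R)$ is controlled in two steps: apply Proposition~\ref{prop:frac-basic-L} (if helpful, on a short subinterval provided by Lemma~\ref{lem:subinterval}) to obtain the \emph{a priori} estimate $h(F) = O_L(d^2 H)$, then use Lemma~\ref{lem:resultant} to deduce $\hmod(R) = O_L(d^3 H)$. Dividing the total cancellation by $N - d \geq D/(2\eta)$ and invoking $D \geq \eta d^3 H$ collapses all three contributions to $O_L(\eta d \log(\eta d H))$, yielding the announced bound.

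The main obstacle is the circular dependence just mentioned: bounding $\hmod(R)$ requires a prior bound on $h(F)$, which is why Proposition~\ref{prop:frac-basic-L} must be invoked as a first pass, and this is precisely what forces the strong hypothesis $D \geq \eta d^3 H$ rather than a weaker condition linear in $d$. A secondary technical point is extracting the explicit constant $C_\Q = 960$, which requires careful bookkeeping of the constants in Lemmas~\ref{lem:padic-L}, \ref{lem:sum-primes-L}, and \ref{lem:resultant}, alongside those from Propositions~\ref{prop:height-reduce}--\ref{prop:int-denom} needed to upgrade the argument to general $L$.
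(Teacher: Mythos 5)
Your proposal follows the paper's skeleton quite closely: an a priori integral model of height $O(\eta d^2H)$ obtained from \cref{prop:frac-basic-L} on a short subinterval given by \cref{lem:subinterval}, cancellation ideals dividing $\Res(P,Q)$, and \cref{lem:padic-L,lem:sum-primes-L} combined with $D\geq \eta d^3H$ to make the total cancellation small on average. But the final assembly by averaging over all of $S$ has two genuine gaps. First, your displayed inequality $\sum_{x\in S} v_\p(\mathfrak g(x)) \leq d\bigl(v_\p(R) + d_L\log D/\log\norm(\p) + D/(p-1)\bigr)$ omits the common content of the pair $(P,Q)$: \cref{lem:padic-L} requires $v_\p(Q)=0$, and if the ideal $\mathfrak r=\gcd$ of all coefficients of $P$ and $Q$ has positive valuation at $\p$, then every $\mathfrak g(x)$ is divisible by $\p$, the left-hand side is at least $\#S$, and the inequality fails; the correct statement carries an extra term $\#S\cdot v_\p(\mathfrak r)$, as in the paper's inequality~\eqref{eq:frac-proof-vp}. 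Relatedly, you discard the nonarchimedean part of $h(F)$ as ``nonpositive'', but that part equals exactly $-\hmod(\mathfrak r)$ and is precisely what cancels the content contribution; over a general $\Z_L$ you cannot simply normalize the content to be trivial, so both terms must be kept and played against each other. This gap is repairable (and over $\Q$ one can divide out the integer content), but as written the step is false.

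The second gap is more serious: the assertion that the discrepancy between $\log\max\{1,\abs{P(x)}_v,\abs{Q(x)}_v\}$ and $\log\max\{\abs{P(x)}_v,\abs{Q(x)}_v\}$ is ``absorbed into the $O_L(1)$ error'' is unjustified for general $L$. At an archimedean place where both $\abs{P(x)}_v<1$ and $\abs{Q(x)}_v<1$ this correction can be as large as roughly $h(Q(x))=O(\eta d^2H)$ for a single point (a nonzero algebraic integer can be very small at one archimedean place), and you give no argument bounding its average over $S$; it vanishes for $L=\Q$ because nonzero rational integers have absolute value at least $1$, but the theorem is stated over an arbitrary number field. The paper sidesteps this entirely: it proves (by the same averaging you use) that at least $2dd_L+1$ points $x$ have $\hmod(\mathfrak s_x)\leq \hmod(\mathfrak r)+C\eta d\log(\eta dH)$, then uses \cref{lem:bad-values} to select among them a single $x$ with $\abs{P(x)}_v\geq \abs{P}_v/((2M)^d(d+1))$ and likewise for $Q$ at every archimedean place simultaneously, and writes the exact projective-height identity at that one point, so no $\max\{1,\cdot\}$ ever enters. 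Two minor further points: \cref{prop:frac-basic-L} must be applied on the short subinterval (not ``if helpful''), since on the full interval its $(d+1)D\log D$ term is far larger than the target; and \cref{lem:resultant} needs bounds on $h(P)$ and $h(Q)$ of the chosen model, which the interpolation construction supplies --- an a priori bound on $h(F)$ alone does not immediately yield one.
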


\begin{proof}
  We can assume that~$F\neq 0$. We have~$D\geq 4\eta d$, so by
  \cref{lem:subinterval} with $k = 2d$, we can find a subinterval of
  $\Zint{ A, B}$ of length at most~$\ceil{4\eta d}$ containing~$2d+1$
  elements of~$S$, denoted by~$x_1,\dots,x_{2d+1}$.  We use
  these~$x_i$ as evaluation points to apply \cref{prop:frac-basic-L}:
  we can write $F = P/Q$ where $P,Q\in \Z_L[X]$ are coprime in~$L[X]$
  and satisfy
  \begin{align*}
    \max\{\h(P), \h(Q)\} &\leq (d+1)(2d+1) H + 2d\ceil{4\eta d} \log(\ceil{4\eta d}) \\
                       &\quad + (4d^2+3d)\log(2M) + (2d+2)\log(2d+1) \\
                       &\quad + (d+1)C_1\\
                       &\leq (27+C_1)\eta d^2H,
  \end{align*}
  where~$C_1$ is the constant from \cref{prop:height-reduce}. To
  simplify the right hand side, we use the inequalities $1\leq d$,\;
  $1\leq \eta$,\; $\ceil{4\eta d} \leq D\leq 2M$,\;
  \mbox{$\ceil{4\eta d} \leq 5\eta d$}, and \mbox{$\log(2M)\leq H$}.

  Let $x\in S$. We define ideals~$\mathfrak s_x$, $\mathfrak n_x$
  and $\mathfrak d_x$ of~$\Z_L$ as follows:
  \begin{displaymath}
    \mathfrak s_x = \gcd\bigl((P(x)), (Q(x))\bigr), \quad 
    (P(x)) =  \mathfrak n_x  \mathfrak s_x, \quad (Q(x)) =
    \mathfrak d_x  \mathfrak s_x.
  \end{displaymath}
  Then $(F(x)) = \mathfrak n_x \mathfrak d_x^{-1}$. The
  ideal~$\mathfrak s_x$ encodes the simplifications that occur when
  evaluating~$P/Q$ at~$x$. The heart of the proof is to show
  that~$\mathfrak s_x$ has small norm for at least some values
  of~$x$. Let~$\mathfrak r$ be the greatest common divisor of all the
  coefficients of~$P$ and~$Q$.

  \begin{claim}
    \label{claim:small-sx}
    There exist at least $2 d d_L+1$ elements~$x$ of~$S$ such that
    \begin{displaymath}
      \hmod(\mathfrak s_x) \leq \hmod(\mathfrak r) + C\eta d\log(\eta d H)
    \end{displaymath}
    for some constant~$C$ depending only on~$L$.
  \end{claim}

  Let us explain how to finish the proof assuming that
  \cref{claim:small-sx} holds. By \cref{lem:bad-values}, we can find
  an~$x\in S$ among these $2 d d_L+1$ values such that for every
  $v\in \arch{L}$, we have
  \begin{displaymath}
    \abs{P(x)}_v
    \geq \dfrac{\abs{P}_v}{(2M)^d (d+1)}\quad\text{and}\quad
    \abs{Q(x)}_v
    \geq \dfrac{\abs{Q}_v}{(2M)^d (d+1)}.
  \end{displaymath}
  Then, by \cref{def:heights}, we have
  \begin{align*}
    \h(F) &=\sum_{v\in \arch{L}} \frac{d_v}{d_L} \log \max\{\abs{P}_v, \abs{Q}_v\}
            - \hmod(\mathfrak r)\\
          &\leq \sum_{v\in \arch{L}} \frac{d_v}{d_L} \log\max\{\abs{P(x)}_v, \abs{Q(x)}_v\}
            - \hmod(\mathfrak r) \\
    &\qquad + d\log(2M)+\log(d+1) \\
          &\leq \sum_{v\in \places{L}} \frac{d_v}{d_L} \log\max\{\abs{P(x)}_v, \abs{Q(x)}_v\}
            + \hmod(\mathfrak s_x) - \hmod(\mathfrak r)\\
    &\qquad + d\log(2M)+\log(d+1)\\
          & \leq H + C \eta d \log(\eta d H) + d\log(2M) + \log(d+1),
  \end{align*}
  as claimed.

  In order to prove \cref{claim:small-sx}, a crucial remark is
  that~$\mathfrak s_x$ divides the resultant~$R$ of~$P$ and~$Q$.
  By \cref{lem:resultant}, we have
  \begin{displaymath}
    \h(R)\leq d \h(P) + d \h(Q) + d\log(2d) \leq (55+2C_1)\eta d^3H.
  \end{displaymath}
  Let $\p\in\primes{L}$ be a prime factor of~$R$ with
  valuation~$\beta_\p$, and let~$I$ be a subset of~$S$ with~$n$
  elements. We claim:
  \begin{equation}
    \label{eq:frac-proof-vp}
    \sum_{x\in I} v_\p(\mathfrak s_x) \leq n\, v_\p(\mathfrak r) +
    d\left(\beta_\p + \dfrac{d_\p\log(D)}{\log \norm(\p)} + \dfrac{e_\p D}{p-1}\right).
  \end{equation}
  
  To prove~\eqref{eq:frac-proof-vp}, we can work in the $\p$-adic
  completion~$L_\p$ of~$L$. Let~$\pi$ be a uniformizer of~$L_\p$, and
  let $r = \min\{v_\p(P),v_\p(Q)\}$ be the $\p$-adic valuation
  of~$\mathfrak r$.  Write $P_1 = P/\pi^r$, $Q_1 = Q/\pi^r$. Then one
  of~$P_1$ and~$Q_1$ is not divisible by~$\pi$; for instance, assume
  that~$\pi$ does not divide~$Q_1$. Then, for every $x\in S$,
  \begin{displaymath}
    v_\p(\mathfrak s_x)\leq \min \bigl\{\beta_\p, v_\p(Q(x))\}\leq v_\p(\mathfrak r) +
    \min \bigl\{\beta_\p, v_\p(Q_1(x)) \bigr\}.
  \end{displaymath}
  Therefore inequality~\eqref{eq:frac-proof-vp} follows from \cref{lem:padic-L}.

  Inequality~\eqref{eq:frac-proof-vp} gives an upper bound on
  the~$\p$-adic valuation of the ideal $\prod_{x\in I} \mathfrak s_x$.
  Taking the product over the prime factors~$\p$ of~$R$, we obtain an
  upper bound on the norm of that ideal. We can assume that~$R$ is not
  a unit, otherwise \cref{claim:small-sx} holds trivially. We obtain
  \begin{displaymath}
    \begin{aligned}
      \abs{\prod_{x\in I} \norm(\mathfrak s_x)} & \leq \norm(\mathfrak
      r)^n \abs{\norm(R)}^d \\
      &\qquad \cdot \exp \biggl(\
      \sum_{\substack{\p\in\primes{L},\,\p|R\\ \p|p\in\primes{\Q}}}
      \Bigl( d d_\p \log(D) + d D\dfrac{e_\p\log \norm(\p)}{p-1}
      \Bigr) \biggr)\\
      &\leq \norm(\mathfrak r)^n \abs{\norm(R)}^d \\
      &\qquad \cdot \exp \bigl(d d_L
      \log(D)\log \abs{\norm(R)}/\log(2) \\
      &\qquad \qquad \quad + d d_L D (2\log\log\abs{\norm(R)} + 4) \bigr).
    \end{aligned}
  \end{displaymath}
  Indeed,~$R$ has at most $\log\abs{\norm(R)}/\log(2)$ prime factors, and we
  can apply \cref{lem:sum-primes-L}.  Since
  $\hmod(R) \leq (55+2C_1)\eta d^3 H$, we obtain
  \begin{align*}
    \sum_{x\in I} \hmod(\mathfrak s_x)
    &\leq n \hmod(\mathfrak r) + d\hmod(R)
      + d  d_L\frac{\log(D)}{\log(2)} \hmod(R)\\
    &\qquad
      + d D (2 \log \log \abs{\norm(R)} + 4)
    \\
    &\leq n\hmod(\mathfrak r) + C_2 \bigl(\eta d^4 H \log(D) + d D \log (\eta d H)\bigr)
  \end{align*}
  with
  \begin{equation}
    \label{eq:C2}
    C_2 =  \max\left\{\frac{3d_L (55+2C_1)}{2\log(2)}, 10 + 2\log (d_L) + 2\log(55+2C_1)  \right\}.
  \end{equation}
  Here we use that~$\log(\eta d H)\geq 1$, and $\log(D)\geq 2\log 2$.

  Now we put into play our assumptions about~$D$ and~$S$ being
  sufficiently large.  Since $D\geq \eta d^3 H \geq 4 > \exp(1)$, and
  the function~$t/\log(t)$ is increasing for $t>\exp(1)$, we have
  \begin{displaymath}
    \dfrac{D}{\log(D)} \geq \dfrac{\eta d^3 H}{3\log (\eta dH)}.
  \end{displaymath}
  Moreover,
  \begin{displaymath}
    \#S - 2 d d_L \geq \frac{D}{\eta} - \dfrac{D}{2\eta} = \dfrac{D}{2\eta}.
  \end{displaymath}
  Therefore,
  \begin{align*}
    \sum_{x\in I} \hmod(\mathfrak s_x)
    &\leq n\hmod(\mathfrak r) + 4C_2 d D \log(\eta d H) \\
    &\leq n\hmod(\mathfrak r) + 8C_2 \eta d \log(\eta d H) (\#S - 2 d d_L).
  \end{align*}

  This shows that in every subset of~$\#S - 2d d_L$ elements of~$S$,
  at least one satisfies the upper bound
  $\hmod(\mathfrak s_x)\leq \hmod(\mathfrak r) + 8C_2\eta d \log(\eta
  dH)$. Hence \cref{claim:small-sx} holds with $C=8C_2$, so the
  theorem holds with~$C_L = 8C_2$.

  In general,~$C_2$ is defined in~\eqref{eq:C2}; in this
  equation,~$C_1$ is a constant such that \cref{prop:int-denom}
  holds. By \cref{rem:CL1,rem:CL2},~$C_1$ can be bounded above
  explicitly in terms of~$d_L$ and~$\Delta_L$ only, so the same
  property holds for~$C_L$.  If~$L=\Q$, we have $C_1=0$, so we can
  take $C_2=120$.
\end{proof}

To conclude, we give the proof of \cref{cor:main}.

\begin{cor}
  \label{cor:main-proved}
  \corstatement
\end{cor}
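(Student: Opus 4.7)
The plan is to apply \cref{thm:main-frac-L} with $L = \Q$ (so $d_L = 1$ and $C_\Q = 960$) over a symmetric interval $\Zint{-M, M}$, where $M$ is a positive integer to be fixed. With $A = -M$, $B = M$, $D = 2M$, and $S := \Zint{-M, M} \setminus V$, one has $\#S \geq 2M + 1 - \#V$. For every $x \in S$ the bound $\h(x) \leq \log M$ combined with the hypothesis gives
$$\h(F(x)) \leq c \max\{1,\, d\log d + d\log M\} =: H.$$

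Next I take $\eta = 2$, which reduces condition~(2) of \cref{thm:main-frac-L} to $M \geq \#V - 1$. The decisive condition is~(3), $D \geq 2 d^3 H$, equivalent to the transcendental inequality
$$M \geq c d^4 \bigl(\log d + \log M\bigr).$$
The minimal such $M$ has order $cd^4 \log d$. A convenient concrete choice is
$$M := \max\bigl\{\#V,\ \bigl\lceil c d^4\, T \log(4d)\bigr\rceil\bigr\}, \qquad T := 12 + \log\max\{1,\#V\} + 2\log c.$$
Since $T \geq 12$, this $M$ simultaneously satisfies all three conditions of \cref{thm:main-frac-L}, together with the mild requirement $H \geq \max\{4, \log(2M)\}$, for all $c, d \geq 1$.

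Plugging into \cref{thm:main-frac-L} yields
$$\h(F) \leq H + 1920\, d\log(2dH) + d\log(2M) + \log(d+1),$$
where $1920 = \eta C_\Q$. From the explicit formula for $M$, each quantity on the right can be bounded by a constant multiple of $dT\log(4d)$, using the elementary inequalities $\log c \leq T/2$, $\log T \leq T$, $\log\max\{1,\#V\} \leq T$, $\log\log(4d) \leq \log(4d)$, and $\log(4d) \geq 1$. Specifically, these give $\log M \leq O(T + \log(4d))$, which in turn yields $H \leq 4cdT\log(4d)$ and $1920\,d\log(2dH) + d\log(2M) + \log(d+1) \leq 1923\,dT\log(4d)$. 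Collecting the two contributions produces the target bound $\h(F) \leq (4c + 1923)\, T\, d\log(4d)$.

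The main obstacle is the bookkeeping of explicit constants in the last step. The stated coefficients $1923$ and $12 + \log\max\{1,\#V\} + 2\log c$ are tight enough that every auxiliary logarithmic quantity (such as $\log\log(4d)$, $\log T$, $\log c$, or $\log(d+1)$) must be judiciously dominated by either $T$ or $\log(4d)$, and the coefficient $4$ in front of $c$ must be verified from the specific degree-$4$ growth of $M$ in $d$. Everything else—choice of $M$, verification of the three hypotheses, and invocation of \cref{thm:main-frac-L}—is routine.
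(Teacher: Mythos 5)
Your proposal follows essentially the same route as the paper: apply \cref{thm:main-frac-L} with $\eta=2$ on an interval of length of order $cd^4\log(4d)$ (you use the symmetric interval $\Zint{-M,M}$, the paper uses $\Zint{0,D}$ with $D=\max\{2\#V,\ceil{4cd^4\log(4cd^4)}\}$), then bound $H$, $\log(2dH)$ and $\log(2M)$ by multiples of $dT\log(4d)$; I checked your intermediate claims $H\leq 4cdT\log(4d)$ and $1920\,d\log(2dH)+d\log(2M)+\log(d+1)\leq 1923\,dT\log(4d)$ and they hold with room to spare, so the bookkeeping is fine. The one inaccuracy is your assertion that the requirement $H\geq\max\{4,\log(2M)\}$ is automatically met by $H:=c\max\{1,d\log d+d\log M\}$ for all $c,d\geq 1$: for $c=d=1$ and $\#V\leq 1$ your choice gives $M=17$ and $H=\log 17<4<\log(2M)$, so the hypothesis of \cref{thm:main-frac-L} fails as stated. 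The repair is the one the paper itself uses: define $H:=\max\{4,\log(2M),\,c\max\{1,d\log d+d\log M\}\}$; this enlarged $H$ still satisfies $H\leq 4cdT\log(4d)$ and condition~(3), so none of your subsequent estimates change.
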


\begin{proof}
  We want to apply \cref{thm:main-frac} on an interval of the form
  $\Zint{0,D}$ for some integer $D\geq 4d$, with~$\eta=2$
  and~$S = \Zint{0,D}\backslash V$. The set~$S$ contains at least
  $D/\eta$ elements as soon as $D\geq 2\# V$.

  For every $x\in S$, we have $\h(x)\leq \log(D)$, hence
  \begin{displaymath}
    \h(F(x))\leq c\max\{1, d\log d + d\log D\}.
  \end{displaymath}
  Hence, if we let
  \begin{displaymath}
    \bigH(D) = \max\{4, \log(2D), c(d\log d + d\log D)\}
  \end{displaymath}
  we can apply \cref{thm:main-frac} with $H = \bigH(D)$ as soon as the condition
  \begin{displaymath}
    D \geq 2d^3 \bigH(D)
  \end{displaymath}
  holds. We check that we can choose
  \begin{displaymath}
    D = \max\{2\#V, \ceil{4cd^4\log(4cd^4)}\}.
  \end{displaymath}
  Then, \cref{thm:main-frac} yields
  \begin{align*}
    \h(F)&\leq \bigH(D) + 1920d\log(2d\bigH(D)) + d\log(2D) + \log(d+1).
  \end{align*}
  We have~$\bigH(D)\leq 4cd\log(dD)$ and $2d \bigH(D)\leq D$, hence
  \begin{align*}
    \h(F) &\leq 4cd\log(dD) + 1920 d\log(D) + d\log(2D) + \log(d+1)\\
          &\leq (4c + 1923)d\log(dD) \\
          &\leq (4c+1923)d (\log(2 d \max\{1,\#V\}) + \log(5cd^5\log(4cd^4)))
  \end{align*}
  To simplify this expression further, we write
  \begin{displaymath}
    \log(5cd^5\log(4cd^4))\leq \log(20c^2d^9)\leq 3 + 2\log(c) + 9\log(d).
  \end{displaymath}
  hence, after other simplifications,
  \begin{displaymath}
    \h(F) \leq C d\log(4d)
  \end{displaymath}
  with
  \begin{displaymath}
    C= (4c+1923) (12 + \log\max\{1,\#V\} + 2\log(c)),
  \end{displaymath}
  as claimed.
\end{proof}

\bibliographystyle{abbrv}
\bibliography{heights-interpolation.bib,unpublished.bib}

\end{document}